\numberwithin{equation}{section}
\crefname{section}{section}{sections}
\crefname{subsection}{subsection}{subsections}
\Crefname{section}{Section}{Sections}
\Crefname{subsection}{Subsection}{Subsections}
\Crefname{figure}{Figure}{Figures}
\crefname{algo}{Algorithm}{Algorithms}
\pgfplotsset{compat=1.18}
\definecolor{viridisYellow}{RGB}{253,231,37}
\definecolor{viridisGreen}{RGB}{94,201,98}
\definecolor{viridisTeal}{RGB}{33,145,140}
\definecolor{viridisBlue}{RGB}{59,82,139}
\definecolor{viridisViolet}{RGB}{68,1,84}
\colorlet{PlotColorScalar}{viridisTeal}
\colorlet{PlotColorFlux}{viridisGreen}
\colorlet{PlotColorFull}{viridisYellow}
\newtheorem{theorem}{Theorem}[section]
\newtheorem{lemma}[theorem]{Lemma}
\newtheorem{proposition}[theorem]{Proposition}
\newtheorem{assumption}{Assumption}[section]
\title{
  Construction of local reduced spaces for Friedrichs' systems via randomized training
  \thanks{The authors acknowledge funding by the BMBF under contract 05M20PMA
and by the Deutsche Forschungsgemeinschaft under Germany’s Excellence
Strategy EXC 2044 390685587, Mathematics M\"unster: Dynamics --
Geometry -- Structure.}}
\author{Christian Engwer, Mario Ohlberger, Lukas Renelt}
\newenvironment{pgfplotslegend}[1][]{
  \begingroup
  \csname pgfplots@init@cleared@structures\endcsname
  \pgfplotsset{#1}
}
{
  \csname pgfplots@createlegend\endcsname
  \endgroup
}
\begin{document}

\makeatletter
\pgfplotsset{
  boxplot/hide outliers/.code={
    \def\pgfplotsplothandlerboxplot@outlier{}%
  }
}
\makeatother

\maketitle

\begin{abstract}
  This contribution extends the localized training approach,
  traditionally employed for multiscale problems
  and parameterized partial differential equations (PDEs) featuring locally heterogeneous coefficients,
  to the class of linear, positive symmetric operators, known as Friedrichs' operators.
  Considering a local subdomain with corresponding oversampling domain we prove the compactness
  of the transfer operator which maps boundary data to solutions on the interior domain.
  While a Caccioppoli-inequality quantifying the energy decay to the interior holds true
  for all Friedrichs' systems, showing a compactness result for
  the graph-spaces hosting the solution is additionally necessary.
  We discuss the mixed formulation of a convection-diffusion-reaction problem where the
  necessary compactness result is obtained by the Picard-Weck-Weber theorem.
  Our numerical results, focusing on a scenario involving heterogeneous diffusion fields
  with multiple high-conductivity channels, demonstrate the effectiveness of the proposed method.
\end{abstract}

\thispagestyle{plain}
\markboth{L. Renelt, C.Engwer and M.Ohlberger}{Quasi-optimal localized approximation spaces for Friedrichs' systems}

\section{Introduction}
Many applications show spatially heterogeneous parameters with local fine-scale structures
which are too small to be resolved numerically. Nevertheless, these
structures can significantly influence the global solution behavior.
Examples of such problems, known as \emph{multiscale problems}, range from fibre-reinforced structures
to composite materials and porous media, among others.
In addition, we are also interested in \emph{parameterized problems} with locally varying influence
of the parameter. If solutions for many different parameter-values are required - for instance
in PDE-constrained optimization problems, inverse problems, Monte-Carlo simulations
or optimal control problems - conventional techniques such as
the Reduced Basis Method~\cite{BennerOhlbergerCohenWillcox} prove similarly infeasible
without employing localization techniques.
\par
As of now, there exists a wide variety of methods from the
multiscale~\cite{malqvist2014LOD,abdulle2012HMM,hou1997MsFEM,efendiev2013GMsFEM},
reduced basis~\cite{OS:15,SmePat16,BEOR:17} or domain decomposition communities~\cite{GL2017,HKKR18}.
In this contribution we focus on multiscale methods utilizing \emph{local approximation spaces}
to incorporate the fine-scale structures.
In contrast to methods that separate the solution (locally) into fine- and a coarse-scale contributions,
these local spaces are designed to locally approximate the full solution.
A global solution is then obtained by solving a globally coupled problem on the coarse scale
using the local spaces.
Depending on the chosen
domain decomposition various coupling conditions are possible,
we refer to~\cite{buhr2020localizedMOR} for an overview of established methods.
\par
One way of constructing the approximation spaces is by means of a \emph{localized training} procedure.
Here, the problem is solved on an oversampling domain and subsequently restricted to
the interior target domain. One then investigates the behavior of the transfer operator
mapping arbitrary boundary values to the solution in the interior domain.
Provided that this operator is compact one can then optimally approximate its whole range
by a few selected vectors, namely its leading left singular vectors.
In practice, quasi-optimal range approximations can be obtained by repeatedly applying
the operator to 'normal-distributed' boundary conditions~\cite{buhr2018randomized}.
\par
While this method has been proven to be applicable to scalar elliptic~\cite{buhr2018randomized}
and scalar parabolic~\cite{schleussSmetana} problems, we extend the idea to the large class
of positive symmetric operators known as \emph{Friedrichs' systems} which include scalar
and non-scalar elliptic problems, as well as certain operators of hyperbolic or mixed type.
For a recent contribution on data-driven model order reduction for Friedrichs' systems
we refer the reader to~\cite{romor2023friedrichs}.
We show that for Friedrichs' systems a Caccioppoli-inequality holds which quantifies the energy decay
of solutions from the oversampling domain to the interior.
Provided that the naturally occurring solution spaces admit a compact embedding into $\Ltwo$,
we show compactness of the transfer operator. In the last section we apply the developed theory
to the elliptic case in its first-order system reformulation.
Numerical experiments for a high-conductivity channel problem demonstrate the performance of the method.

\section{Linear, positive symmetric PDE-operators}
We start by formally defining the class of linear \emph{Friedrichs' operators}~\cite{friedrichs1958} as differential operators $A: C^\infty(\Omega) \to\LtwoM$ of the form\vspace{-1em}
\begin{equation}\label{eq:friedrichsOperator}
  Au \;=\; A_0 u + \sum_{i=1}^{d} A_i \frac{\partial u}{\partial x_i}
\end{equation}
with matrix-valued functions $A_i \in [L^\infty(\Omega)]^{m\times m},\, \sum_{i=1}^d \frac{\partial A_i}{\partial x_i}\;\eqqcolon \nabla\cdot A\in [L^\infty(\Omega)]^{m\times m}$. Additionally, we require the following two properties:
\vspace{0.25em}
\begin{enumerate}\itemsep0.25em
\item $A_i \; = A_i^T$ for all $i=1,\dots,d$,
\item $A_0 + A_0^T - \nabla\cdot A \;>\; 2 \varepsilon I_d$ for some $\varepsilon  > 0$.\\
\end{enumerate}
We define the graph space $\graphSpace$ as the space of all $\LtwoM$-functions which possess a weak $A$-derivative, \ie
\begin{equation}\label{eq:graphSpace}
  \graphSpace \;\coloneqq\; \{ u\in\LtwoM \;|\; Au\in\LtwoM\}.
\end{equation}
A norm on $\graphSpace$ is defined by the graph-norm
\begin{equation}\label{eq:graphNorm}
  \norm{u}_{H(A)}^2 \;\coloneqq\; \norm{u}_{\LtwoM}^2 + \norm{Au}_{\LtwoM}^2.
\end{equation}
One quickly verifies that $H^1(\Omega)^m \subseteq \graphSpace \subseteq \LtwoM$.
To incorporate boundary conditions we define, following~\cite{ernGuermond2007}, the boundary operator $D: H(A) \to H(A)'$ by
\begin{equation}\label{eq:boundaryOperator}
  (Du)(v) \;\coloneqq\; (Au,v)_{\LtwoM} - (u,A^*v)_{\LtwoM} \qquad \text{for all}\; u,v\in H(A).
\end{equation}
For sufficiently smooth $A_i$ one has the representation
\begin{equation*}
  (Du)(v) \;=\; \int_{\partial\Omega} v^T \Dcal u \diff s,
  \qquad \Dcal\coloneqq \sum_{i=1}^d n_i A_i
\end{equation*}
where $\vec{n} = (n_1,\dots,n_d)$ denotes the unit outer normal to $\partial\Omega$. This operator $D$ is then paired with a second, non-unique \emph{admissible boundary operator} $M: H(A) \to H(A)'$ which needs to satisfy
  \begin{enumerate}
  \item $(Mu)(u) \;\geq\; 0$ for all $u\in H(A)$,
  \item $H(A) = \ker(D-M) \;+\; \ker(D+M^*)$.
  \end{enumerate}
Given such an operator $M$ we can define the closed subspace
\begin{equation*}
  H_0(A) \coloneqq \ker(D-M) \subset H(A)
\end{equation*}
and obtain the following well-posedness result:
\begin{theorem}[Well-posedness~\cite{ernGuermond2007}]
For any $f\in\LtwoM$, the problem
  \begin{equation}\label{eq:weakProblem}
    \text{Find}\;u\in H_0(A): \quad (Au,v)_{\LtwoM} = (f,v)_{\LtwoM} \qquad \forall v\in\LtwoM.
  \end{equation}
  is well-posed. Its solution $u$ is the unique minimizer of the residual energy
  \begin{equation}\label{eq:weakMinimizationProblem}
    \min_{u\in H_0(A)} \norm{Au - f}_{\LtwoM}.
  \end{equation}
\end{theorem}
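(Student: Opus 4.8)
The plan is to recast the weak problem as an operator equation and then establish that $A$ is an isomorphism from $H_0(A)$ onto $\LtwoM$ via a coercivity estimate (for injectivity and stability) together with a duality argument (for surjectivity). First I would observe that, since the test space in \cref{eq:weakProblem} is all of $\LtwoM$ and $Au\in\LtwoM$, the requirement $(Au,v)_{\LtwoM}=(f,v)_{\LtwoM}$ for every $v$ is equivalent to the equation $Au=f$ in $\LtwoM$. Hence well-posedness reduces to showing that $A\colon H_0(A)\to\LtwoM$ is a continuous bijection with bounded inverse, and the minimization characterization \cref{eq:weakMinimizationProblem} is then immediate: the nonnegative functional $u\mapsto\norm{Au-f}_{\LtwoM}$ attains its global minimum value $0$ exactly at a solution of $Au=f$, and uniqueness of that minimizer is inherited from injectivity of $A$.

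The cornerstone will be the Friedrichs energy identity. Exploiting $A_i=A_i^T$ and the definition \cref{eq:boundaryOperator} of $D$, an integration by parts gives, for every $u\in\graphSpace$,
\[
  2\,(Au,u)_{\LtwoM}\;=\;\bigl((A_0+A_0^T-\nabla\cdot A)\,u,u\bigr)_{\LtwoM}\;+\;(Du)(u).
\]
For $u\in H_0(A)=\ker(D-M)$ one has $Du=Mu$, so the first property of the admissible operator yields $(Du)(u)=(Mu)(u)\ge 0$, and the positivity $A_0+A_0^T-\nabla\cdot A>2\varepsilon I_d$ then produces the coercivity estimate $(Au,u)_{\LtwoM}\ge\varepsilon\norm{u}_{\LtwoM}^2$ on $H_0(A)$. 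This already delivers injectivity together with the a priori bound $\varepsilon\norm{u}_{\LtwoM}\le\norm{Au}_{\LtwoM}$; combined with $\norm{u}_{H(A)}^2=\norm{u}_{\LtwoM}^2+\norm{Au}_{\LtwoM}^2$ it shows that $A$ is bounded below in the graph norm, hence has closed range.

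It then remains to prove that the range is all of $\LtwoM$, \ie that the orthogonal complement of $A(H_0(A))$ is trivial, and this is the step I expect to be the main obstacle. Let $v\in\LtwoM$ satisfy $(Au,v)_{\LtwoM}=0$ for all $u\in H_0(A)$. Testing against smooth compactly supported fields, which lie in $H_0(A)$, identifies $v$ with an element of the graph space of the formal adjoint $A^*$ satisfying $A^*v=0$; feeding this back into \cref{eq:boundaryOperator} forces the boundary relation $(Dv)(u)=0$ for all $u\in\ker(D-M)$. The second admissibility property, the decomposition $\graphSpace=\ker(D-M)+\ker(D+M^*)$, is precisely what should allow one to upgrade this annihilator condition to the membership $v\in\ker(D+M^*)$. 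The delicate passage from the orthogonality of $v$ to this adjoint boundary condition is the heart of the bijectivity criterion of \cite{ernGuermond2007}, and it is exactly where the second property of $M$ is indispensable; I would either invoke that result or reconstruct it as a separate lemma.

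Once $v\in\ker(D+M^*)$ is secured the argument closes symmetrically. Since $A^*$ is again a Friedrichs operator with the identical positive part $A_0+A_0^T-\nabla\cdot A$, the adjoint counterpart of the energy identity gives $(A^*v,v)_{\LtwoM}\ge\varepsilon\norm{v}_{\LtwoM}^2$ on $\ker(D+M^*)$, because there $(Dv)(v)=-(M^*v)(v)\le 0$ by the first property of $M$. As $A^*v=0$, this forces $v=0$, so the range of $A$ is dense and, being closed, equals $\LtwoM$. Consequently $A\colon H_0(A)\to\LtwoM$ is a continuous bijection with bounded inverse, which establishes well-posedness of \cref{eq:weakProblem} and, by the equivalence noted at the outset, the minimization characterization \cref{eq:weakMinimizationProblem}.
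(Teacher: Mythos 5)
The paper gives no proof of this theorem at all---it is imported verbatim from Ern and Guermond---and your argument is a faithful reconstruction of exactly that reference's proof: coercivity of $A$ on $\ker(D-M)$ via the Friedrichs energy identity $2(Au,u)=((A_0+A_0^T-\nabla\cdot A)u,u)+(Du)(u)$ together with property (1) of $M$, hence injectivity and closed range, and surjectivity via the adjoint coercivity on $\ker(D+M^*)$ using property (2). The one step you explicitly defer---upgrading the annihilation condition $(Du)(v)=0$ for all $u\in\ker(D-M)$ to the membership $v\in\ker(D+M^*)$, which along the way also needs $\ker D\subseteq\ker M$ so that compactly supported fields really lie in $H_0(A)$---is precisely the content of the cited bijectivity criterion, so invoking it there is legitimate and your proposal is correct.
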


Instead of working with the non-symmetric problem~\cref{eq:weakProblem} one can thus also solve the corresponding (symmetric) normal equation
\begin{equation}\label{eq:weakNormalEq}
  \text{Find}\;u\in H_0(A): \quad (Au,Av)_{\LtwoM} = (f,Av)_{\LtwoM} \qquad \forall v\in H_0(A).
\end{equation}
This approach is also known as first order system least squares, see \eg~\cite{cai1994FOSLS,broersen2015petrov}.

\section{Optimal local approximation spaces}
For coefficient fields $A_i$ exhibiting fine-scale or locally strongly varying structure, solving \cref{eq:weakProblem} globally is not feasible. We thus employ a overlapping or non-overlapping domain decomposition $\{\Omega_i\}_{i=1}^{N_V}$ of the domain $\Omega$ and aim at approximating the restricted global solution $R_i u\coloneqq u\restr{\Omega_i}$ using only local computations.
\par
However, exactly computing $R_i u$ locally is not possible as for a well-posed local problem information about $u$ on the local boundary $\partial\Omega_i$ is required - which is inaccessible without the actual global computation of $u$.
This illustrates the need for a suitable approximation of the local solution via an efficiently computable (localized), small linear subspace.
Following~\cite{babuskaLipton2011}, one therefore defines oversampling domains  $\Omega_i^* \supsetneq \Omega_i$ satisfying
\begin{equation*}
  \dist(\Omega_i, \partial\Omega_i^*) \;\eqqcolon\;\delta > 0
\end{equation*}
and considers the \emph{transfer operator} $T_i$ mapping boundary data on $\partial\Omega_i^*$ to the solution of a local problem, restricted to the interior $\Omega_i$. We then aim at approximating the \emph{range} of $T_i$ \ie the space of local solutions for all possible boundary values on $\partial\Omega_i^*$. If the transfer operator can be shown to be compact, the optimal approximation space $R_{opt}^k$ of size $k$ is given by the $k$ leading left singular vectors. Here, optimality is measured in the sense of Kolmogorov, \ie $R_{opt}^k$ is a minimizer of the Kolmogorov $N$-width
\begin{equation*}
  d_N(rg(T_i)) \;\coloneqq\; \min_{dim(R)=N} \dist(R, rg(T_i)).
\end{equation*}
The attained minimal value is given by the first neglected singular value $\sigma_{k+1}$ of $T$.
\subsection{Derivation of the local problem formulation}
We start the derivation of a local formulation of \cref{eq:weakProblem} by considering the restricted Friedrichs' operator $A: \graphSpaceOS \to L^2(\Omega_i^*)^m$. Defining the shared global boundary
$\Gamma_i \coloneqq \partial\Omega_i^* \cap \partial\Omega$ and the internal boundary
$\Gamma_i^{int} \coloneqq \partial\Omega_i^* \setminus \partial\Omega$ we can also split the boundary operators $D_i$, $M_i$ as follows
\begin{align*}
  (D_i u)(v) &= (\Dcal^{int}_i u,v)_{L^2(\Gamma_i^{int})}
               + (\Dcal u,v)_{L^2(\Gamma_i)}, \\
  (M_i u)(v) &= (\Mcal^{int}_i u,v)_{L^2(\Gamma_i^{int})}
               + (\Mcal u,v)_{L^2(\Gamma_i)}.
\end{align*}
Note that for consistency reasons we assume the local operators $D_i, M_i$ to coincide with the global operators $D,M$ on the global boundary $\Gamma_i$.
For a given \emph{admissible boundary function} $\tilde{g}\in \Bcal_i \;\coloneqq\; rg(\Dcal_i^{int}-\Mcal_i^{int})$ we define the subspaces
\begin{align*}
  H_0(A;\Omega_i^*)
  &\coloneqq \{ u\in\graphSpaceOS \;|\; (\Dcal-\Mcal)(u) = 0\}, \\
  H_0(A; \Omega_i^*; \tilde{g})
  &\coloneqq \{ u\in H_0(A; \Omega_i^*) \;|\; (\Dcal_i^{int} - \Mcal_i^{int})(u) = \tilde{g} \}.
\end{align*}
The local weak problem then reads
\begin{equation}\label{eq:localWeakProblem}
  \text{Find}\;u_i\in H_0(A; \Omega_i^*; \tilde{g}): \quad (A u_i,v)_{L^2(\Omega_i^*)} = (f,v)_{L^2(\Omega_i^*)} \qquad \forall v\in L^2(\Omega_i^*)^m.
\end{equation}
Note, that the restricted global solution $R_i u$ is an element of $H_0(A;\Omega_i^*;\tilde{g})$ if we choose $\tilde{g} \coloneqq (\Dcal_i^{int} - \Mcal_i^{int})(R_i u)$. This implies that for this specific choice of $\tilde{g}$ the restricted global solution $R_i u$ is recovered as the unique solution of \cref{eq:localWeakProblem}.
Denoting by $\tilde{\Scal}_i: \Bcal_i \to \graphSpaceOS$ the \emph{solution operator} mapping a boundary function $\tilde{g}$ to the solution $u_i$ of~\cref{eq:localWeakProblem} we define the transfer operator $T_i$ via
\begin{equation}\label{eq:transferOperator}
  T_i: \Bcal_i \to H(A; \Omega_i),
  \qquad T_i(\tilde{g}) \;\coloneqq\; \tilde{\Scal}_i(\tilde{g})\restr{\Omega_i}.
\end{equation}
As $\tilde{\Scal}_i$ (and thus $T_i$) is only affine linear due to the right-hand side, we replace it with its linear part $\Scal_i \coloneqq \tilde{\Scal}_i - \tilde{\Scal}_i(0)$ which is the solution operator of the \emph{shifted} local problem
\begin{equation}\label{eq:shiftedLocalWeakProblem}
  \text{Find}\;u_i\in H_0(A; \Omega_i^*; \tilde{g}): \quad (A u_i,v) = 0 \qquad \forall v\in L^2(\Omega_i^*)^m.
\end{equation}
Its image, \ie the set of all possible solutions of \cref{eq:shiftedLocalWeakProblem}, shall in the following be denoted by
\begin{equation*}
  \Hcal_i \;\coloneqq\; \operatorname{Im}(\Scal_i) \quad\subseteq H_0(A;\Omega_i^*).
\end{equation*}

\subsection{Compactness of the transfer operator}
Showing compactness of $T_i$ usually involves two steps, a compactness argument for the space of possible solutions $\Hcal_i$ and an estimate on the energy decay from the oversampling domain $\Omega_i^*$ to the interior $\Omega_i$.
The compactness argument heavily relies on the regularizing properties of $A$ which is why we formulate it as an assumption here:
\begin{assumption}\label{ass:compactness}
  $\Hcal_i$ is compactly embedded in $L^2(\Omega_i^*)^m$.
\end{assumption}
In \cref{sec:cdr} this will be shown to hold for a general convection-diffusion-reaction operator.
\begin{proposition}[Caccioppoli inequality]\label{prop:caccioppoli}
  Let $A=A^0+\sum A^i\partial_{x_i}$ be a Friedrichs operator and $u\in\Hcal_i$ a solution of~\cref{eq:shiftedLocalWeakProblem}. Then, it holds that
  \begin{equation}
    \norm{u}_{H(A; \Omega_i)} \leq
    \left(\max_i\norm{A^i}_{\infty}\right)
    \frac{2}{\operatorname{dist}(\Omega_i,\partial\Omega^*_i)}
    \norm{u}_{L^2(\Omega^*_i)}.
  \end{equation}
\end{proposition}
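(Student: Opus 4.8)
The plan is to combine a cut-off argument with the $L^2$-coercivity built into the Friedrichs structure, after first exploiting that the test space in~\cref{eq:shiftedLocalWeakProblem} is all of $L^2(\Omega_i^*)^m$. Indeed, testing against every $v\in L^2(\Omega_i^*)^m$ forces $Au=0$ almost everywhere on $\Omega_i^*$ for each $u\in\Hcal_i$. Hence $\norm{Au}_{L^2(\Omega_i)}=0$ and the graph norm collapses, $\norm{u}_{H(A;\Omega_i)}=\norm{u}_{L^2(\Omega_i)}$, so it remains only to bound the interior $L^2$-norm by $\norm{u}_{L^2(\Omega_i^*)}$ with the stated constant.

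Next I would fix a Lipschitz cut-off $\eta$ with $\eta\equiv 1$ on $\Omega_i$, $\eta\equiv 0$ near the internal boundary $\Gamma_i^{int}$, $0\le\eta\le 1$, and $\norm{\nabla\eta}_\infty\le 2/\operatorname{dist}(\Omega_i,\partial\Omega_i^*)$; such a function exists precisely because the two domains are separated by the distance $\delta\coloneqq\operatorname{dist}(\Omega_i,\partial\Omega_i^*)$. The decisive computation is the Leibniz rule for the first-order operator $A$: since $\eta$ is scalar-valued and $Au=0$,
\begin{equation*}
  A(\eta u)\;=\;\eta\,Au+\Bigl(\sum_{i=1}^d(\partial_{x_i}\eta)\,A^i\Bigr)u\;=\;\Bigl(\sum_{i=1}^d(\partial_{x_i}\eta)\,A^i\Bigr)u ,
\end{equation*}
which is supported only on the annulus $\Omega_i^*\setminus\Omega_i$ where $\nabla\eta\neq 0$ and is bounded pointwise by $(\max_i\norm{A^i}_\infty)\,\norm{\nabla\eta}_\infty\,|u|$.

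I would then use the coercivity inherent in the Friedrichs structure. Symmetrising the first-order part of $(Aw,w)_{L^2(\Omega_i^*)}$ by integration by parts and invoking properties~(1)--(2) of $A$ together with the nonnegativity of the boundary form on $H_0(A;\Omega_i^*)$ (admissibility of $M_i$) yields, for every $w\in H_0(A;\Omega_i^*)$,
\begin{equation*}
  (Aw,w)_{L^2(\Omega_i^*)}\;=\;\tfrac12\bigl((A^0+(A^0)^T-\nabla\cdot A)\,w,\,w\bigr)_{L^2(\Omega_i^*)}+\tfrac12\,(D_i w)(w)\;\ge\;\varepsilon\,\norm{w}_{L^2(\Omega_i^*)}^2 ,
\end{equation*}
so that $\norm{w}_{L^2(\Omega_i^*)}\le\varepsilon^{-1}\norm{Aw}_{L^2(\Omega_i^*)}$ by Cauchy--Schwarz. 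Applying this to $w=\eta u$ and chaining the previous bounds gives
\begin{equation*}
  \norm{u}_{L^2(\Omega_i)}\;\le\;\norm{\eta u}_{L^2(\Omega_i^*)}\;\le\;\varepsilon^{-1}\norm{A(\eta u)}_{L^2(\Omega_i^*)}\;\le\;\varepsilon^{-1}\Bigl(\max_i\norm{A^i}_\infty\Bigr)\frac{2}{\operatorname{dist}(\Omega_i,\partial\Omega_i^*)}\norm{u}_{L^2(\Omega_i^*)} ,
\end{equation*}
which is the asserted inequality up to the coercivity constant $\varepsilon$.

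The step I expect to be the main obstacle is justifying that $w=\eta u$ is an admissible argument for the coercivity estimate, i.e.\ that $\eta u\in H_0(A;\Omega_i^*)$, since that bound only holds on this subspace. Membership in the graph space follows from the Leibniz rule above ($\eta u$ and $A(\eta u)$ are both in $L^2$), and the internal boundary condition is trivially homogeneous because $\eta$ vanishes near $\Gamma_i^{int}$; the delicate point is the shared boundary $\Gamma_i$, where one must verify that multiplication by the scalar $\eta$ commutes with the multiplication-type boundary operators, $(\Dcal-\Mcal)(\eta u)=\eta\,(\Dcal-\Mcal)(u)=0$. Making this rigorous at the level of the abstractly defined operator $D$ requires a density or integration-by-parts argument in the graph space, and ensuring that the boundary form $(D_i(\eta u))(\eta u)$ stays nonnegative is the technically subtle part of the proof.
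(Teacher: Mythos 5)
The paper states \cref{prop:caccioppoli} without giving a proof, so there is no in-text argument to compare against; judged on its own terms, your attempt follows the natural route (cut-off function plus Friedrichs coercivity) but does not close. Two points. First, your opening observation is correct and has a consequence you should draw explicitly: since testing against all of $L^2(\Omega_i^*)^m$ forces $Au=0$ on $\Omega_i^*$, the graph norm collapses and $\norm{u}_{H(A;\Omega_i)}=\norm{u}_{L^2(\Omega_i)}\leq\norm{u}_{L^2(\Omega_i^*)}$ holds \emph{trivially with constant $1$} --- which is already all that the compactness argument in \cref{thm:compactness} consumes. Everything after that in your write-up is therefore only about recovering the specific $\delta$-dependent constant, and there it falls short: the chain $\norm{\eta u}\leq\varepsilon^{-1}\norm{A(\eta u)}\leq\varepsilon^{-1}(\max_i\norm{A^i}_\infty)\,\norm{\nabla\eta}_\infty\norm{u}_{L^2(\Omega_i^*)}$ carries the coercivity constant $\varepsilon^{-1}$, which is absent from the statement being proved. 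As written you have proved a different inequality, and you say so yourself; that is a genuine gap, not a cosmetic one, because for small $\varepsilon$ your constant is strictly worse than the asserted one (there is also a hidden dimension-dependent factor in passing from $\bigl\lVert\sum_i(\partial_{x_i}\eta)A^i\bigr\rVert$ to $\max_i\norm{A^i}_\infty\norm{\nabla\eta}_\infty$).

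Second, the step you flag as delicate is indeed where the argument would fail in general. The coercivity identity requires $(D_i w)(w)\geq 0$ for $w=\eta u$; the contribution on $\Gamma_i^{int}$ vanishes because $\eta$ does, but on the shared global boundary $\Gamma_i$ you need $\int_{\Gamma_i}\eta^2\,u^T\Dcal u\,\mathrm{d}s\geq 0$. The admissibility condition $(Mu)(u)\geq 0$ together with $u\in\ker(D-M)$ only gives nonnegativity of the \emph{integral} $(Du)(u)$, not pointwise nonnegativity of the integrand $u^T\Dcal u$, so inserting the weight $\eta^2$ is not justified abstractly. It does hold for the concrete boundary operator of \cref{sec:cdr} (Dirichlet conditions give $u^T\Dcal u=0$ pointwise on $\Gamma_i$), and the issue is vacuous for interior subdomains where $\Gamma_i=\emptyset$, but a proof of the proposition as stated for a general admissible $M$ would need either a pointwise sign assumption on $\Dcal$ restricted to the boundary condition, or a cut-off that also vanishes near $\Gamma_i$ (which changes which norm is controlled). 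I would restructure the proof around your first observation, state the trivial constant-$1$ bound as the result actually needed downstream, and present the cut-off computation only for the case where the weighted boundary term can be signed.
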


\begin{theorem}[Compactness of $T_i$]\label{thm:compactness}
  Let \cref{ass:compactness} hold. Then, the transfer operator $T_i$ is compact.
\end{theorem}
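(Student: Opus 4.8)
The plan is to exhibit $T_i$ as a composition and then argue by sequential compactness, combining the compact embedding of \cref{ass:compactness} with the energy-decay estimate of \cref{prop:caccioppoli}. I would factor the (linear part of the) transfer operator as $T_i = R_i \circ \Scal_i$, where $\Scal_i : \Bcal_i \to \graphSpaceOS$ is the solution operator of the shifted problem~\cref{eq:shiftedLocalWeakProblem} and $R_i$ denotes restriction to $\Omega_i$. Since it suffices to show that $T_i$ maps every bounded sequence $(\tilde{g}_n) \subset \Bcal_i$ to a sequence admitting a subsequence convergent in $H(A;\Omega_i)$, I set $u_n \coloneqq \Scal_i(\tilde{g}_n) \in \Hcal_i$ and track these solutions through the three spaces $\graphSpaceOS$, $L^2(\Omega_i^*)^m$ and $H(A;\Omega_i)$.

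First I would invoke the well-posedness of the local problem, which provides continuous dependence of the solution on the boundary data and hence boundedness of $\Scal_i$. Consequently $(u_n)$ is bounded in $\graphSpaceOS$, i.e.\ a bounded sequence in $\Hcal_i$ equipped with the graph norm. By \cref{ass:compactness} the embedding $\Hcal_i \hookrightarrow L^2(\Omega_i^*)^m$ is compact, so a subsequence $(u_{n_k})$ converges, and in particular is Cauchy, in $L^2(\Omega_i^*)^m$.

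The decisive step is to upgrade this $L^2$-convergence on the oversampling domain to $H(A)$-convergence on the interior. Here I would exploit linearity: since $\Hcal_i = \operatorname{Im}(\Scal_i)$ is a linear subspace, each difference $u_{n_k} - u_{n_l}$ again lies in $\Hcal_i$ and solves the shifted problem, so \cref{prop:caccioppoli} applies and yields
\begin{equation*}
  \norm{u_{n_k} - u_{n_l}}_{H(A;\Omega_i)}
  \;\leq\; \left(\max_i\norm{A^i}_{\infty}\right)\frac{2}{\operatorname{dist}(\Omega_i,\partial\Omega_i^*)}\,
  \norm{u_{n_k} - u_{n_l}}_{L^2(\Omega_i^*)}.
\end{equation*}
As the right-hand side tends to zero, the sequence $(u_{n_k}\restr{\Omega_i}) = (T_i\tilde{g}_{n_k})$ is Cauchy in $H(A;\Omega_i)$; completeness of the graph space then supplies a limit, which establishes compactness of $T_i$.

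The main obstacle is conceptual rather than computational: a compact embedding alone only delivers convergence in the weaker $L^2$-topology, which is insufficient for compactness into the graph space $H(A;\Omega_i)$. What closes the argument is that the Caccioppoli inequality controls the stronger interior graph norm by the weaker $L^2$ norm on the larger domain --- an estimate available precisely because the target domain $\Omega_i$ is strictly contained in $\Omega_i^*$ with positive distance $\delta$. I would take care that the difference $u_{n_k} - u_{n_l}$ genuinely satisfies the hypotheses of \cref{prop:caccioppoli}, which is guaranteed exactly by having passed to the linear solution operator $\Scal_i$ whose image $\Hcal_i$ is a linear space.
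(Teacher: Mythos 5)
Your proposal is correct and follows essentially the same route as the paper: boundedness of $\Scal_i$ from well-posedness, extraction of an $L^2(\Omega_i^*)^m$-convergent subsequence via \cref{ass:compactness}, and an upgrade to $\graphSpaceLocal$-convergence via \cref{prop:caccioppoli}. You are in fact slightly more explicit than the paper at the final step, correctly noting that the Caccioppoli inequality must be applied to the differences $u_{n_k}-u_{n_l}$, which lie in $\Hcal_i$ because $\Scal_i$ is linear.
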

\begin{proof}
  Let $(g_k)_{k\in\N} \subseteq \Bcal_i(\partial\Omega_i^*)$ be an
  arbitrary bounded sequence of admissible boundary values. Continuity
  of $\Scal_i$ implies that the local solutions on the oversampling
  domain $u_k \;\coloneqq \Scal_i(g_k)$ are also bounded in
  $\graphSpaceOS$. \cref{ass:compactness} now guarantees the existence
  of a subsequence $(u_{k_j})_{j\in\N}\subseteq\Hcal_i$
  strongly-convergent in $L^2(\Omega_i^*)^m$. Using the
  Caccioppoli-inequality (Proposition~\ref{prop:caccioppoli}) we can then deduce that on the interior domain $\Omega_i$ this subsequence even converges in the stronger $\graphSpaceLocal$-norm which concludes the proof. \hfill
\end{proof}

\section{Quasi-optimal approximation spaces via localized training}
Since the computation of the optimal approximation spaces spanned by the singular vectors of $T_i$ is infeasible in practice, we will following describe the \emph{localized training} procedure~\cite{buhr2018randomized} which efficiently generates \emph{quasi-optimal} approximation spaces by repeatedly applying $T_i$ to 'random boundary conditions' $\tilde{g}$.
\par
To that end, let $\Bcal^h\subset\Bcal$ be a discretization of the boundary functions with finite dimension $N_B\in\N$. Given a basis $\Phi_B$ of $\Bcal^h$, we define the isomorphism $D_{\Phi_B}: \Bcal^h \to \R^{N_B}$ mapping a function to its coefficient representation in $\Phi_B$. Conversely, we can obtain a 'random boundary function' by sampling a vector $\underline{r}\in\R^{N_B}$ with normal distributed entries $\underline{r}_i \sim \mathcal{N}(0,1)$ and considering $D_{\Phi_B}^{-1}\underline{r} \in\Bcal^h$.
\par
\cref{alg:randomizedRangefinder} is now obtained by combining the repeated application of $T_i$ to random boundary conditions with an a-posteriori error estimator which is given as the projection error of an additional set of random solutions onto the current range approximation, see~\cite{buhr2018randomized} for details. The error of the obtained range approximation $R^n$ can then a-priori be bounded as follows:
\begin{proposition}[\cite{buhr2018randomized}]
  Let $R^n$ be the result of \cref{alg:randomizedRangefinder}. Then, for $n\geq 4$ there holds
  \begin{equation*}
    \mathbb{E}\left( \norm{T_i - P_{R^n}T_i} \right)
    \;\leq\;
    \alpha \min_{\substack{k+p=n \\ k,\,p \geq 2}}
    \left[
      \left(1+\sqrt{\frac{k}{p-1}}\right) \sigma_{k+1}
      + \frac{e\sqrt{n}}{p} \left(\sum_{j>k}\sigma_j^2\right)^{\frac{1}{2}}
    \right].
  \end{equation*}
\end{proposition}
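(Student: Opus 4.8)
The plan is to follow the analysis of the randomized range approximation from~\cite{buhr2018randomized}, which adapts the probabilistic error bounds of the randomized SVD to the present Hilbert-space setting; the bracketed expression is precisely the classical average-case spectral-norm bound, and the constant $\alpha$ absorbs the effect of working in the discretized, non-Euclidean boundary space $\Bcal^h$. First I would invoke \cref{thm:compactness} to guarantee that $T_i$ is compact, so that it admits a singular value decomposition with singular values $\sigma_1\geq\sigma_2\geq\dots\to 0$ and orthonormal systems of left and right singular vectors. The output $R^n$ is the span of the $n$ random snapshots $T_i(D_{\Phi_B}^{-1}\underline{r}_j)$; collecting the sampled coefficient vectors into a Gaussian matrix $G\in\R^{N_B\times n}$, the approximation space is $R^n = rg(T_i G)$ and the quantity to be controlled is $\mathbb{E}\norm{T_i - P_{R^n}T_i}$.

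The core of the argument is the deterministic projection bound of Halko--Martinsson--Tropp. Writing $T_i$ through its SVD and partitioning the right singular directions into the leading $k$-dimensional block and its orthogonal complement, I would split $G$ accordingly into $G_1$ (the part projecting onto the top-$k$ right singular directions, of size $k\times n$) and the tail $G_2$. Provided $G_1$ has full row rank, the projection error onto the sampled range obeys
\[
  \norm{T_i - P_{R^n}T_i}^2 \;\leq\; \norm{\Sigma_2}^2 + \norm{\Sigma_2\, G_2\, G_1^{\dagger}}^2,
\]
where $\Sigma_2$ is the diagonal block of trailing singular values and $G_1^{\dagger}$ denotes the Moore--Penrose pseudoinverse. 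The first term contributes the factor $\sigma_{k+1}$, while the second is responsible for the tail energy $(\sum_{j>k}\sigma_j^2)^{1/2}$.

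Taking the expectation over the Gaussian entries then closes the argument. Choosing the oversampling split $n=k+p$ with $p\geq 2$, rotational invariance of the Gaussian law makes $G_1$ and $G_2$ independent Gaussian matrices, so I would combine the standard moment identity $\mathbb{E}\norm{G_1^{\dagger}}_F^2 = k/(p-1)$ with a bound on $\mathbb{E}\norm{G_1^{\dagger}}$ and Hölder's inequality to produce the two summands $(1+\sqrt{k/(p-1)})\,\sigma_{k+1}$ and $\tfrac{e\sqrt{n}}{p}(\sum_{j>k}\sigma_j^2)^{1/2}$; minimizing over the admissible pairs $(k,p)$ yields the stated estimate. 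The main obstacle is not this Gaussian algebra, which is classical, but the reduction that precedes it: one must verify that sampling normal coefficients $\underline{r}_j$ and mapping them through $D_{\Phi_B}^{-1}$ induces a Gaussian field on $\Bcal^h$ whose covariance is compatible with the inner product in which the SVD of $T_i$ is taken. The discrepancy between the Euclidean coefficient geometry and the functional inner product on $\Bcal^h$ is exactly what forces the multiplicative constant $\alpha$, and controlling it through the conditioning of the basis $\Phi_B$ is the delicate point distinguishing the operator setting from the finite-dimensional matrix case.
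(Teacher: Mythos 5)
The paper offers no proof of this proposition — it is imported verbatim from the cited reference \cite{buhr2018randomized} — so the only meaningful comparison is with that source, and your sketch reproduces its argument faithfully: the Halko--Martinsson--Tropp deterministic bound via the SVD block splitting, the Gaussian moment identities requiring $p\geq 2$, the minimization over the splits $n=k+p$, and, crucially, the observation that $\alpha$ accounts for the mismatch between the Euclidean geometry of the coefficient vectors $\underline r$ and the inner products of the source and range spaces of $T_i$. Your outline is correct and takes essentially the same route as the cited proof.
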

Note, that the bound scales approximately as $\sqrt{n}\,\sigma_{n+1}$ and is thus almost optimal.
\begin{algorithm}
  \begin{algorithmic}
    \State $B \leftarrow \emptyset$
    \State sample $\underline{r}_i \sim \mathcal{N}(0,1)$
    \State $M \leftarrow \{ TD^{-1}_S \underline{r}_1, \dots, TD^{-1}_S \underline{r}_{n_t}\}$
    \While{$\max_{t\in M}\norm{t} \cdot c_{\text{est}} > tol$}
    \State sample $\underline{r} \sim \mathcal{N}(0,1)$
    \State $B \leftarrow B \cup (TD^{-1}_S \underline{r})$
    \State $\text{orthonormalize}(B)$
    \State $M \leftarrow \left\{ t - P_{\operatorname{span}(B)}t \;|\; t\in M \right\}$
    \EndWhile
    \State \Return $R^n = \operatorname{span}(B)$
  \end{algorithmic}
  \caption{Adaptive randomized range approximation~\cite{buhr2018randomized}}
  \label{alg:randomizedRangefinder}
\end{algorithm}

\section{Application to diffusion problems in mixed form}\label{sec:cdr}
In this section we apply the developed theory to a classic convection-diffusion-reaction (CDR) problem
\begin{equation}\label{eq:strongCDRproblem}
  \begin{cases}
    -\nabla\cdot(D\nabla u) + \vec{b}\nabla u + cu &= f \qquad\text{in}\;\Omega, \\
    \hfill u &= g \qquad\text{on}\;\partial\Omega.
  \end{cases}
\end{equation}
with $D\in L^\infty(\Omega)^{d\times d}$ symmetric positive definite, $\vec{b}\in \Hdiv$, $c\in L^\infty(\Omega)$, $c - \tfrac{1}{2}\nabla\cdot\vec{b} \geq 0$ a.e., $g\in H^{1/2}(\partial\Omega)$.
\par
Localized training using a classic weak formulation has been investigated \ie in~\cite{buhr2018randomized}. These results can be considered as a reference for the performance of our approach based on the reformulation as a Friedrichs' system:
Introducing the diffusive flux $\sigma := -D\nabla u$ one obtains the mixed formulation
\begin{equation}\label{eq:mixedCDRproblem}
    A(\sigma,u) \coloneqq\;
    \begin{pmatrix}
      D^{-1}\sigma + \nabla u \\
      \nabla\cdot\sigma + \vec{b}\nabla u + cu
    \end{pmatrix}
    =
    \begin{pmatrix}
      0 \\ f
    \end{pmatrix}
    \quad\text{in}\;\Omega,
    \qquad
    u = g
    \quad\text{on}\;\partial\Omega.
\end{equation}
One can verify that $A$ is indeed a Friedrichs' operator with graph space $H(A)$
isomorphic to $\Hdiv\times H^1(\Omega)$~\cite{ernGuermond2006Friedrichs1}.
By choosing the boundary operator
\begin{equation*}
  \langle M(\sigma,u),(\tau,v)\rangle \coloneqq
  (\sigma\vec{n},v)_{L^2(\partial\Omega)}
  - (u,\tau\vec{n})_{L^2(\partial\Omega)}
\end{equation*}
we obtain classic Dirichlet-boundary conditions, \ie $H_0(A;\Omega)\cong\Hdiv\times H_0^1(\Omega)$.

\subsection{Weak formulation}
We consider the shifted local weak formulation \cref{eq:shiftedLocalWeakProblem} and the associated transfer operator
\begin{equation*}
  T_i: \Bcal_i \to \graphSpaceLocal.
\end{equation*}
\par
\begin{proposition}\label{prop:cdrTransferOpCompactness}
  Let $A$ be as in~\cref{eq:mixedCDRproblem} and $d\leq 3$. Then, $T_i$ is compact.\\[0.5em]
\end{proposition}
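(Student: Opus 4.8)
The plan is to deduce the statement from \cref{thm:compactness}: since that theorem reduces compactness of $T_i$ to \cref{ass:compactness}, it suffices to show that the solution space $\Hcal_i\subseteq\graphSpaceOS$ of the shifted problem~\cref{eq:shiftedLocalWeakProblem} embeds compactly into $L^2(\Omega_i^*)^m$. Here $m=d+1$, and I would exploit the isomorphism $\graphSpaceOS\cong H(\operatorname{div};\Omega_i^*)\times H^1(\Omega_i^*)$ to write every element of $\Hcal_i$ as a pair $(\sigma,u)$.

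First I would extract the pointwise form of the constraint. Because the test space in~\cref{eq:shiftedLocalWeakProblem} is all of $L^2(\Omega_i^*)^m$, any $(\sigma,u)\in\Hcal_i$ satisfies $A(\sigma,u)=0$ almost everywhere, i.e.
\[
  D^{-1}\sigma+\nabla u=0,\qquad \operatorname{div}\sigma+\vec{b}\cdot\nabla u+cu=0 .
\]
The first identity is the crucial one: it gives $D^{-1}\sigma=-\nabla u$, so that $\operatorname{curl}(D^{-1}\sigma)=0$, while the second expresses $\operatorname{div}\sigma$ through $u$ and $\nabla u$.

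Given a bounded sequence $(\sigma_k,u_k)\subseteq\Hcal_i$, I would treat the two components separately. The scalar part is routine: the $u_k$ are bounded in $H^1(\Omega_i^*)$, so Rellich--Kondrachov yields a subsequence converging strongly in $L^2(\Omega_i^*)$. The flux part is the genuine difficulty, and it is precisely here that the restriction $d\leq 3$ enters. A bounded sequence in $H(\operatorname{div};\Omega_i^*)$ alone need not be $L^2$-precompact, and since $D$ is only $L^\infty$ (the heterogeneous, high-conductivity regime) one cannot fall back on elliptic regularity for $u$. Instead, the curl-free structure derived above places $\sigma_k$ in a bounded subset of the weighted space $\{v\in L^2(\Omega_i^*)^d : \operatorname{div}v\in L^2,\ \operatorname{curl}(D^{-1}v)\in L^2\}$, on which the Picard--Weck--Weber selection theorem---valid for $d\leq 3$, symmetric positive-definite $L^\infty$ weights and Lipschitz domains---provides the missing $L^2$-precompactness. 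Combining the two convergences gives a subsequence of $(\sigma_k,u_k)$ converging in $L^2(\Omega_i^*)^m$, which is exactly \cref{ass:compactness}, and \cref{thm:compactness} then closes the argument.

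The step I expect to be most delicate is supplying the boundary condition under which Picard--Weck--Weber is applied: elements of $\Hcal_i$ lie in $H_0(A;\Omega_i^*)$ and therefore carry a homogeneous essential condition only on the global part $\Gamma_i$ of $\partial\Omega_i^*$, remaining free on the internal interface $\Gamma_i^{int}$. I would hence invoke the variant of the theorem permitting mixed boundary conditions, and verify that $(\Dcal-\Mcal)(u)=0$ on $\Gamma_i$ translates into an admissible tangential condition for $D^{-1}\sigma$. Controlling this trace for a merely Lipschitz interface, and confirming that the weighted compactness result survives with the rough $L^\infty$ coefficient $D$, is where the technical care is concentrated; once that is settled, the remaining subsequence extraction is straightforward.
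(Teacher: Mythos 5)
Your argument follows essentially the same route as the paper: reduce to \cref{ass:compactness} via \cref{thm:compactness}, use the pointwise identity $D^{-1}\sigma=-\nabla u$ to obtain curl-freeness of the flux, then apply the Picard--Weck--Weber theorem (with $\varepsilon=D^{-1}$) to the flux component and Rellich to the scalar component. The only differences are cosmetic: the paper dispatches the tangential-trace condition with a short integral identity rather than a mixed-boundary-condition variant, and explicitly treats $d=2$ by the extension $(\sigma_1,\sigma_2,0)$ and $d=1$ via $\Hdiv=H^1(\Omega)$, which you subsume under ``valid for $d\leq 3$.''
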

Following \cref{thm:compactness} we need to show that \cref{ass:compactness} holds for the given operator $A$. While the full graph space $H(A) \cong \Hdiv\times H^1(\Omega)$ is \emph{not} compact in $\LtwoM$ we can further characterize the solutions $u\in\Hcal$ to obtain a compact embedding of the subspace $\Hcal \subsetneq H(A)$ into $\LtwoM$:
\vspace{0.5em}
\begin{lemma}\label{lemma:l2rotation}
Let $d=3$, then for $(\sigma,u)\in\Hcal$ one has $D^{-1}\sigma\in\Hrotzero$.\\[0.5em]
\end{lemma}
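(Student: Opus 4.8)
The plan is to read the claim straight off the first block-row of the shifted mixed system, the point being that $D^{-1}\sigma$ is forced to be a gradient and is therefore irrotational. By construction $(\sigma,u)\in\Hcal=\operatorname{Im}(\Scal_i)$ solves the shifted problem \cref{eq:shiftedLocalWeakProblem}, so that $A(\sigma,u)=0$ in $L^2(\Omega_i^*)^4$; testing against $v=A(\sigma,u)$ shows that both block-rows of the operator in \cref{eq:mixedCDRproblem} vanish pointwise a.e. The first row then yields the identity $D^{-1}\sigma=-\nabla u$, and since the graph space is isomorphic to $\Hdiv\times H^1(\Omega_i^*)$ the scalar component satisfies $u\in H^1(\Omega_i^*)$, so that $\nabla u$, and with it $D^{-1}\sigma$, genuinely lies in $L^2(\Omega_i^*)^3$.

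First I would verify that the distributional rotation of a gradient vanishes: for $u\in H^1$ and $\phi\in C_c^\infty(\Omega_i^*)^3$ integration by parts gives $\langle\operatorname{rot}\nabla u,\phi\rangle=\langle\nabla u,\operatorname{rot}\phi\rangle=-\langle u,\operatorname{div}\operatorname{rot}\phi\rangle=0$ because $\operatorname{div}\circ\operatorname{rot}=0$; equivalently this is just the symmetry of the weak second derivatives of $u$. Substituting the identity from the first step yields $\operatorname{rot}(D^{-1}\sigma)=-\operatorname{rot}\nabla u=0$, \ie $D^{-1}\sigma\in\Hrotzero$, which is the assertion. This is also the only place where the hypothesis $d=3$ enters: it is what turns $\operatorname{rot}$ into a vector-field operator and makes $\Hrotzero$ the natural companion of $\Hdiv$.

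I would then record why this short statement is the decisive one for \cref{ass:compactness}. Writing $w\coloneqq D^{-1}\sigma=-\nabla u$, the lemma supplies $\operatorname{rot}w=0\in L^2$, while the second block-row $\nabla\cdot\sigma=-\vec{b}\nabla u-cu$ together with $u\in H^1$ gives $\operatorname{div}(Dw)=\nabla\cdot\sigma\in L^2(\Omega_i^*)$. Thus $w$ enjoys exactly the $D$-weighted $H(\operatorname{rot})\cap\Hdiv$ regularity (curl in $L^2$, $D$-weighted divergence in $L^2$) to which the Picard--Weck--Weber compact-embedding theorem applies, and a strong $L^2$-limit of $w$ then transfers to $\sigma=Dw$ because $D\in L^\infty$.

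The step I expect to cost the real effort is not the lemma itself --- which amounts to the observation that a gradient is curl-free --- but the boundary bookkeeping needed to invoke Picard--Weck--Weber downstream. The elements of $\Hcal$ carry a homogeneous Dirichlet condition for $u$ only on the global part $\Gamma_i$ of $\partial\Omega_i^*$, so the tangential trace $w\times n=-\nabla_\tau u$ vanishes on $\Gamma_i$ but not on the internal boundary $\Gamma_i^{int}$, where $u$ inherits the arbitrary data $\tilde g$. Matching this genuinely mixed tangential-boundary situation to the hypotheses of the (weighted) compactness theorem, rather than establishing the curl-free identity, is where the care will be required.
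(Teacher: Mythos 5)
Your first step --- reading $D^{-1}\sigma=-\nabla u$ off the first block-row of the shifted system and concluding $\nabla\times(D^{-1}\sigma)=0$ distributionally --- is exactly the paper's argument, carried out a little more carefully than the paper itself does. The gap lies in what you then declare to be ``the assertion.'' The space $\Hrotzero$ in the lemma is the one fed into the Picard--Weck--Weber theorem (\cref{thm:maxwellCompactness}), and that embedding is not compact without a boundary condition on one of the two factors; accordingly the paper's proof contains a second step that you omit entirely, namely an argument that the tangential trace $\vec{n}\times(D^{-1}\sigma)$ vanishes on $\partial\Omega_i^*$, via the identity $\int_{\partial\Omega}\vec{n}\times(D^{-1}\sigma)\,ds=\int_{\Omega}\nabla\times(D^{-1}\sigma)\,dx=0$. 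Showing that the rotation vanishes places $D^{-1}\sigma$ in the kernel of $\nabla\times$ inside $H(\operatorname{rot})$, but not in $\Hrotzero$ with its homogeneous tangential-trace condition, so the lemma as stated is not established by your two paragraphs.

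To your credit, your closing remarks put a finger on exactly the delicate point: since $u$ carries arbitrary data $\tilde g$ on the internal boundary $\Gamma_i^{int}$, the tangential trace $-\nabla_\tau u$ of $D^{-1}\sigma$ has no reason to vanish there, and you are right that reconciling this with the hypotheses of \cref{thm:maxwellCompactness} is where the real work sits (the paper's own integral identity only shows that the trace integrates to zero over the whole boundary, not that it vanishes). But flagging the obstruction and deferring it ``downstream'' does not close it: the lemma is precisely the place where the paper claims the boundary condition, so a proof of the lemma must either establish the vanishing tangential trace or explain why the compactness theorem applies without it. As written, your proposal proves only the curl-free half of the statement.
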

\begin{proof}
  We have $D^{-1}\sigma = -\nabla u$ in $\Ltwo^d$ and thus $\nabla\times(D^{-1}\sigma) = 0$ as the curl of a gradient field is zero. The tangential trace also vanishes as can be easily seen from the identity
  \begin{equation*}
    \int_{\partial\Omega} \vec{n}\times(D^{-1}\sigma) \diff s = \int_{\Omega} \nabla\times(D^{-1}\sigma) \diff x = 0.\vspace{-1em}
  \end{equation*}
  \hfill
\end{proof}

\begin{theorem}[Picard-Weber-Weck~\cite{picard1984maxwell, weber1980maxwell, weck1974maxwell}]
  \label{thm:maxwellCompactness}
  Let $\varepsilon\in L^\infty(\Omega)^{3\times 3}$ be symmetric and uniformly positive definite and $\Omega\subseteq\R^3$ be a bounded weak Lipschitz-domain. Then, the embedding
  \begin{equation*}
    \Hrotzero\cap \varepsilon^{-1}\Hdiv\to \Ltwo^3
  \end{equation*}
  is compact.
\end{theorem}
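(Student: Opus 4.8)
The goal is to show that every sequence $(E_n)_{n\in\N}$ bounded in $\Hrotzero\cap\varepsilon^{-1}\Hdiv$ --- that is, with $\norm{E_n}_{\Ltwo^3}$, $\norm{\nabla\times E_n}_{\Ltwo^3}$ and $\norm{\nabla\cdot(\varepsilon E_n)}_{\Ltwo}$ uniformly bounded --- admits a subsequence converging in $\Ltwo^3$. Since $\varepsilon$ is bounded and uniformly positive definite, I would equip $\Ltwo^3$ with the equivalent weighted inner product $\langle u,v\rangle_\varepsilon\coloneqq\int_\Omega (\varepsilon u)\cdot v\diff x$ and use the associated Helmholtz decomposition $E_n=\nabla p_n+\theta_n$, orthogonal in $\langle\cdot,\cdot\rangle_\varepsilon$, with $p_n\in H_0^1(\Omega)$ and $\theta_n$ weighted-solenoidal, $\nabla\cdot(\varepsilon\theta_n)=0$. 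Applying $\nabla\cdot(\varepsilon\,\cdot)$ to the decomposition gives $\nabla\cdot(\varepsilon\nabla p_n)=\nabla\cdot(\varepsilon E_n)\eqqcolon g_n$, bounded in $\Ltwo$. As $p\mapsto\nabla\cdot(\varepsilon\nabla p)$ is an isomorphism $H_0^1(\Omega)\to H^{-1}(\Omega)$ and the embedding $\Ltwo\hookrightarrow H^{-1}(\Omega)$ is compact (being dual to the compact Rellich embedding $H_0^1(\Omega)\hookrightarrow\Ltwo$), a subsequence of $g_n$ converges in $H^{-1}(\Omega)$, whence the corresponding $p_n$ converge in $H_0^1(\Omega)$ and $\nabla p_n$ converges in $\Ltwo^3$. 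Crucially, this argument needs no boundary regularity and tolerates the merely $L^\infty$-coefficient $\varepsilon$.

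For the weighted-solenoidal part $\theta_n$ I would perform a second, unweighted Helmholtz decomposition $\theta_n=\nabla r_n+\psi_n$ with $r_n\in H_0^1(\Omega)$ and $\nabla\cdot\psi_n=0$. Since gradients of $H_0^1$-functions have vanishing tangential trace, $\psi_n=\theta_n-\nabla r_n$ again lies in $\Hrotzero$, with $\nabla\times\psi_n=\nabla\times\theta_n$ bounded and $\nabla\cdot\psi_n=0$; hence $\psi_n$ is a genuinely divergence-free field in $\Hrotzero$ with controlled curl. Precompactness of $(\psi_n)$ in $\Ltwo^3$ then follows from the \emph{unweighted} selection theorem (the case $\varepsilon=I$). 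The leftover gradient $\nabla r_n$ is recovered from the constraint: $0=\nabla\cdot(\varepsilon\theta_n)$ yields $\nabla\cdot(\varepsilon\nabla r_n)=-\nabla\cdot(\varepsilon\psi_n)$, and since $\psi_n$ --- and therefore $\varepsilon\psi_n$ --- is precompact in $\Ltwo^3$, its divergence is precompact in $H^{-1}(\Omega)$; the same elliptic isomorphism as above then makes $r_n$ precompact in $H_0^1(\Omega)$. Combining, $\theta_n=\nabla r_n+\psi_n$ and hence $E_n=\nabla p_n+\theta_n$ is precompact in $\Ltwo^3$.

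This reduction isolates the genuine difficulty in the unweighted selection theorem: the compactness of $\Hrotzero\cap\Hdiv\hookrightarrow\Ltwo^3$ on a mere weak Lipschitz domain. On smooth or convex $\Omega$ this is immediate from a Gaffney inequality $\norm{\psi}_{H^1}\lesssim\norm{\psi}_{\Ltwo^3}+\norm{\nabla\times\psi}_{\Ltwo^3}+\norm{\nabla\cdot\psi}_{\Ltwo}$ for fields with vanishing tangential trace, followed by Rellich; I expect the Lipschitz case, where this global $H^1$-regularity provably fails, to be the main obstacle. The classical remedy of Weck, Weber and Picard is a localization argument: cover $\partial\Omega$ by bi-Lipschitz charts, flatten the boundary, establish the selection property on the flattened model (or on a smooth approximating domain) where Gaffney-type estimates are available, and glue the local pieces together with a partition of unity, controlling the commutator terms produced by the chart transformations. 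Once this geometric core is in place, the two Helmholtz reductions above upgrade it to the weighted statement claimed in the theorem.
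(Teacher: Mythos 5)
The paper does not actually prove this statement: it is imported verbatim from the literature and attributed to Weck, Weber and Picard, so there is no in-paper argument to compare yours against. Judged on its own, your outer reduction of the weighted statement to the unweighted one ($\varepsilon=I$) is correct and is essentially the standard route. Both Helmholtz decompositions are sound: the weighted potential $p_n$ is controlled because $\nabla\cdot(\varepsilon E_n)$ is bounded in $\Ltwo$ and hence precompact in $H^{-1}(\Omega)$, and the correction potential $r_n$ is controlled because $\nabla\cdot(\varepsilon\psi_n)$ inherits precompactness from $\psi_n$; both steps use only Lax--Milgram for $-\nabla\cdot(\varepsilon\nabla\,\cdot)$ with $L^\infty$ uniformly elliptic coefficients and Rellich for $H_0^1(\Omega)$, neither of which requires boundary regularity. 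You also correctly verify that subtracting gradients of $H_0^1$-potentials does not leave $\Hrotzero$ and does not change the curl.

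The gap is that the entire analytic content of the theorem sits in the step you defer: compactness of $\Hrotzero\cap\Hdiv\hookrightarrow\Ltwo^3$ on a mere weak Lipschitz domain. Your sketch (cover, flatten, glue) has the right silhouette but hides the key obstruction: a bi-Lipschitz change of variables transforms the vector proxies of $1$-forms (curl side) and of $2$-forms (divergence side) by \emph{different}, merely $L^\infty$, matrices. Hence after flattening you do not land in the unweighted half-space model where a Gaffney inequality is available --- you land back in a weighted problem with rough coefficients. As written, your plan is therefore circular at this point: the unweighted case is to be obtained by flattening, but flattening requires the weighted case on the model domain. This is precisely why the classical proofs establish the local \emph{weighted} selection property directly (Picard's ``elementary proof'' interleaves the potential-theoretic reduction you describe with the chart argument, rather than running them in sequence). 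So the proposal is a correct and useful peeling-off of the coefficient $\varepsilon$, but it is not a proof of the cited theorem; deferring to the references, as the paper does, is the honest resolution.
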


\begin{theorem}[Rellich compactness theorem~\cite{rellich1930}]\label{thm:rellich}
  Let $\Omega\subseteq\R^d$ be a open and bounded Lipschitz-domain. Then, the embedding
  \begin{equation*}
    H^1(\Omega) \to \Ltwo
  \end{equation*}
  is compact.
\end{theorem}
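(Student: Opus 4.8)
The plan is to combine an extension argument with the Fr\'echet--Kolmogorov (Kolmogorov--Riesz) compactness criterion in $\Ltwo$. First I would invoke the Stein (Calder\'on) extension theorem: since $\Omega$ is a bounded Lipschitz domain, there exists a bounded linear extension operator $E: H^1(\Omega)\to H^1(\R^d)$. Composing with multiplication by a fixed cutoff $\chi\in C_c^\infty(\R^d)$ with $\chi\equiv 1$ on $\Omega$, every bounded sequence $(u_n)\subset H^1(\Omega)$ yields a sequence $\tilde u_n\coloneqq \chi\,Eu_n$ that is bounded in $H^1(\R^d)$, supported in the fixed compact set $K\coloneqq\operatorname{supp}\chi$, and satisfies $\tilde u_n\restr{\Omega}=u_n$. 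It therefore suffices to show that $(\tilde u_n)$ is relatively compact in $L^2(\R^d)$.

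Next I would verify the two hypotheses of the Fr\'echet--Kolmogorov criterion for the $L^2(\R^d)$-bounded family $(\tilde u_n)$. Tightness (uniform smallness of the mass outside large balls) is immediate from the common compact support in $K$. The decisive point is the uniform $L^2$-equicontinuity of translations, \ie $\sup_n \norm{\tau_h\tilde u_n-\tilde u_n}_{L^2(\R^d)}\to 0$ as $h\to 0$, where $(\tau_h f)(x)\coloneqq f(x+h)$. This is where the $H^1$-control enters: by Plancherel's theorem and the elementary bound $|e^{\mathrm{i}t}-1|\leq|t|$,
\begin{equation*}
  \norm{\tau_h\tilde u_n-\tilde u_n}_{L^2(\R^d)}^2
  = \int_{\R^d} \bigl|e^{\mathrm{i}\,h\cdot\xi}-1\bigr|^2\,\bigl|\widehat{\tilde u_n}(\xi)\bigr|^2\,\diff\xi
  \;\leq\; |h|^2\,\norm{\nabla\tilde u_n}_{L^2(\R^d)}^2 .
\end{equation*}
Since $\norm{\nabla\tilde u_n}_{L^2(\R^d)}$ is uniformly bounded, the right-hand side vanishes uniformly in $n$ as $h\to 0$, which establishes equicontinuity.

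With both hypotheses in place, the Fr\'echet--Kolmogorov theorem gives relative compactness of $(\tilde u_n)$ in $L^2(\R^d)$; extracting a convergent subsequence and restricting to $\Omega$ produces an $\Ltwo$-convergent subsequence of the original $(u_n)$. As $(u_n)$ was an arbitrary bounded sequence in $H^1(\Omega)$, the embedding $H^1(\Omega)\to\Ltwo$ is compact.

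I expect the main obstacle to lie in the extension step rather than in the analytic estimate: the translation bound above is a one-line computation once one is on all of $\R^d$, whereas securing a \emph{bounded} $H^1$-extension genuinely requires the Lipschitz regularity of $\partial\Omega$, and this is precisely the hypothesis that cannot be dropped. A Fourier-free alternative is to mollify the extended functions: for fixed $\varepsilon$ the mollifications $\tilde u_n*\rho_\varepsilon$ are uniformly bounded and equi-Lipschitz on $K$ (their gradients are controlled by $\norm{\tilde u_n}_{L^2}\norm{\nabla\rho_\varepsilon}_{L^2}$), hence admit a uniformly convergent subsequence by Arzel\`a--Ascoli, while $\norm{\tilde u_n*\rho_\varepsilon-\tilde u_n}_{L^2}\leq C\varepsilon\,\norm{\nabla\tilde u_n}_{L^2}$ uniformly in $n$; a diagonal argument over $\varepsilon\to 0$ then yields the same conclusion.
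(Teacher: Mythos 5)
Your argument is correct, but note that the paper does not prove this statement at all: it is quoted as a classical theorem with a citation to Rellich, and its only role is to be \emph{applied} (to the scalar component $u_{n_j}$) in the proof of Proposition~\ref{prop:cdrTransferOpCompactness}. So there is no in-paper proof to compare against. What you give is the standard modern proof of the Rellich--Kondrachov theorem for $p=2$: a bounded Calder\'on--Stein extension $E:H^1(\Omega)\to H^1(\R^d)$ followed by a cutoff to get a fixed compact support, and then the Fr\'echet--Kolmogorov criterion, with the translation estimate $\norm{\tau_h \tilde u_n-\tilde u_n}_{L^2}\leq |h|\,\norm{\nabla \tilde u_n}_{L^2}$ obtained via Plancherel. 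All steps are sound; the Lipschitz hypothesis is indeed consumed exactly where you say it is (existence of the bounded extension operator), and your mollification variant is an equally valid Fourier-free route. Two cosmetic points: choose $\chi\equiv 1$ on a neighborhood of $\overline{\Omega}$ (harmless since $\Omega$ is bounded), and if you track Fourier normalizations the $(2\pi)^d$ factors cancel between Plancherel and the identity $\int|\xi|^2|\widehat{\tilde u_n}|^2\,\mathrm{d}\xi \propto \norm{\nabla\tilde u_n}_{L^2}^2$, so the displayed inequality holds as written.
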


\par
The proof of \cref{prop:cdrTransferOpCompactness} for $d=3$ now follows
as a direct combination of the last three statements:
For any bounded sequence $(\sigma_n,u_n)_{n\in\N} \subseteq\Hcal$
we can (due to \cref{lemma:l2rotation}) apply \cref{thm:maxwellCompactness} to the sequence
$(D^{-1}\sigma_n)_{n\in\N}$ by considering $\varepsilon \coloneqq D^{-1}$.
Denoting by $(D^{-1}\sigma_{n_j})_{j\in\N}$ the obtained converging subsequence,
boundedness of $D$ implies the convergence of $(\sigma_{n_j})_{j\in\N}$ in $\Ltwo^d$.
The claim follows by applying \cref{thm:rellich} to $(u_{n_j})_{j\in\N}$.
In dimension $d=2$ we have
\begin{equation*}
  \operatorname{rot}_{2}(\sigma) \coloneqq \partial_{x_1}\sigma_2 - \partial_{x_2}\sigma_1 \in \Ltwo
\end{equation*}
by an argument similar to \cref{lemma:l2rotation}.
Applying \cref{thm:maxwellCompactness} to the natural extension
$(\sigma_1,\sigma_2,0): \Omega \to \R^3$ then yields the desired statement.
The case $d=1$ directly follows from \cref{thm:rellich} due to the identity $\Hdiv = H^1(\Omega)$.

\section{Numerical experiments}
In this section we will numerically examine the proposed method for
the CDR-problem~\cref{eq:mixedCDRproblem}. We consider the domain
$\Omega=[0,1]^2$ and corresponding oversampling domain
$\Omega^* = [-\delta,1+\delta]^2$, $\delta > 0$ which we approximate using a structured grid $\Omega_h^*$ of quadrilaterals with edge length $h$.
\par
For the discretization of the graph-space $H(A) \cong \Hdiv\times H^1$
Lagrange-elements $\mathbb{P}^k(\Omega_h^ *)$ of order $k=1$ for the scalar variable
and Raviart-Thomas functions $RT^{k-1}(\Omega_h^*)$ for the flux variable are used.
The discrete boundary space $\Bcal$ is chosen as the trace space of $\mathbb{P}^k(\Omega_h^*)$, \ie $\Bcal \coloneqq \mathbb{P}^k(\partial\Omega_h^*)$.

\begin{table}[t]
  \begin{center}
    \renewcommand{\arraystretch}{1.4}
    \begin{tabular}{|c|c|c|c|c|c|c|c|}
    \hline
    Test case & $\Omega$ & $\delta$ & $D(x)$ & $\vec{b}$ & $c(x)$ \\
    \hline
    Pure diffusion & $[0,1]^2$ & various & $1.0$ & $(0,0)^T$ & $0$ \\
    \hline
    Full CDR & $[0,1]^2$ & $1.0$ &
    $
    \begin{cases}
      10^{2} &\; x\in\omega_{HC}, \\
      1.0 &\;\text{else.}
    \end{cases}
    $
    & $(1,1)^T$ &
    $
    \begin{cases}
      0, &\; x\in\omega_{HC} \\
      1.0 &\;\text{else.}
    \end{cases}
    $ \\
    \hline
    \end{tabular}
  \end{center}
  \caption{Parameters of the test cases}
  \label{tab:parameters}
  \vspace*{-1em}
\end{table}

\subsection{Different oversampling sizes}
First, we numerically examine possible choices of the oversampling distance $\delta$.
In most publications this distance is chosen as $\delta = \operatorname{diam}(\Omega_i)$
which amounts to an additional layer of coarse grid cells (\cref{fig:oversamplingDomain}).
In \cref{fig:oversamplingSizesPureDiffusion} the influence of the oversampling distance $\delta$
in a pure diffusion problem is depicted. Here, the size $N$ of the generated basis with
the tolerance being fixed, scales approximately like $\mathcal{O}(\delta^{-1})$,
reminiscent of the scaling in the Caccioppoli-inequality~\cref{prop:caccioppoli}.
As the approximation error $\norm{T - P_{N}T}$ converges exponentially in $N$ we infer the relation\\
\centerline{$
  \norm{T - P_{N}T} \;\in\; \mathcal{O}(\exp(-c\, \delta N)).
$}

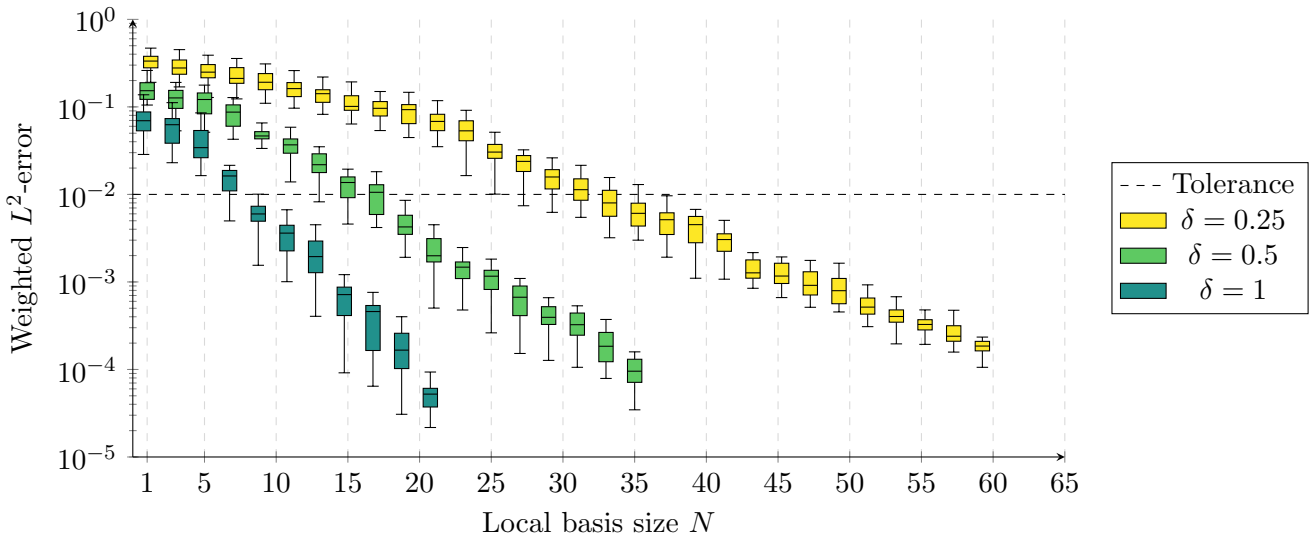
\begin{figure}[ht]
  \centering
  \def\nplots{3}
  \begin{tikzpicture}
    \begin{semilogyaxis}[
        boxplot/draw direction = y,
        boxplot/box extend = 4/(\nplots+1),
        width=0.75\linewidth,
        height=0.4\linewidth,
        xmajorgrids=true,
        grid style={gray!30, dashed},
        axis x line = bottom,
        axis y line = left,
        xmin=0, xmax=65, ymin=1e-5,ymax=1,
        xlabel= Local basis size $N$,
        xtick = {1,5,10,...,100},
        ylabel= {Weighted $L^2$-error},
        legend style={at={(1.05,0.5)},anchor=west},
      ]
      \addplot[mark=none, black, dashed, samples=2, domain=0:66] {1e-2};
      \addlegendentry{Tolerance}
      \foreach \n in {1,3,...,59} {
        \addplot[
          color=black,fill=viridisYellow,
          boxplot, /pgfplots/boxplot/hide outliers,
          boxplot/draw position = \n + (3 - 0.5*(\nplots+1))/(\nplots+1),
          forget plot,
        ]
        table[y={error_dim_\n}, fill,col sep=comma]
        {data/range_evaluation_n_60_ntest_40_neval_20_classicBasicDiffusion_weightedL2_os025.csv};
      }
      \addlegendimage{area legend, fill=viridisYellow}
      \addlegendentry{$\delta=0.25$}
      \foreach \n in {1,3,...,35} {
        \addplot[
          color=black,fill=viridisGreen,
          boxplot, /pgfplots/boxplot/hide outliers,
          boxplot/draw position = \n + (2 - 0.5*(\nplots+1))/(\nplots+1),
          forget plot,
        ]
        table[y={error_dim_\n}, fill,col sep=comma]
        {data/range_evaluation_n_80_ntest_40_neval_20_classicBasicDiffusion_weightedL2_os05.csv};
      }
      \addlegendimage{area legend, fill=viridisGreen}
      \addlegendentry{$\delta=0.5$}
      \foreach \n in {1,3,...,21} {
        \addplot[
          color=black,fill=viridisTeal,
          boxplot, /pgfplots/boxplot/hide outliers,
          boxplot/draw position = \n + (1 - 0.5*(\nplots+1))/(\nplots+1),
          forget plot,
        ]
        table[y={error_dim_\n}, fill,col sep=comma]
        {data/range_evaluation_n_120_ntest_40_neval_20_classicBasicDiffusion_weightedL2_os1.csv};
      }
      \addlegendimage{area legend, fill=viridisTeal}
      \addlegendentry{$\delta=1$}
    \end{semilogyaxis}
  \end{tikzpicture}
  \caption{\label{fig:oversamplingSizesPureDiffusion}
    Influence of different oversampling sizes $\delta$ in a simple diffusion test case. The same grid width $h=1/30$ was used in all tests.}
    \vspace*{-2em}
\end{figure}

\subsection{Results for the full CDR problem}
We now consider the full CDR problem with diffusion, advection and reaction
of comparable magnitude (\cref{tab:parameters}).
In addition, a subdomain $\omega_{HC} \subseteq \Omega^*$ of high-conductivity channels
is introduced where the diffusion tensor $D$
has a significantly larger magnitude (\cref{fig:highConductivityChannels}).
In a first test case the channels do not intersect and run parallel from left to right.
The second test case consists of channels both in $x$- and $y$-direction.
In both settings we compute approximation spaces and evaluate the projection error of
an additional set of $20$ solutions from $\Hcal$.
As the flux variable $\sigma \approx -D\nabla u$ has, due to the high diffusivity values,
a significantly larger magnitude, we consider the error norm
\begin{equation*}
  \norm{\sigma,u}^2 \;\coloneqq\; \norm{D^{-1}\sigma}_{\Ltwo^d}^2 + \norm{u}_{\Ltwo}^2
  \quad\sim\; \norm{\sigma,u}_{\LtwoM}^2.
\end{equation*}
in order to obtain a more uniform convergence of both solution components.

\def\addlegendimage{\csname pgfplots@addlegendimage\endcsname}

\definecolor{matchingRed}{HTML}{D01F3C}
\begin{figure}[ht]
  \centering
  \begin{minipage}{0.33\linewidth}
    \centering
    \begin{tikzpicture}[scale=0.9]
      \draw[step=1/12,black!20,thin] (-1,-1) grid (2, 2);
      \draw[viridisViolet,very thick] (0,0) rectangle (1,1);
      \draw[matchingRed,very thick] (-1,-1) rectangle (2,2);
      \node[viridisViolet] at (0.5,0.5) {\bf $\Omega_{i}$};
      \node[matchingRed,right] at (2.1,0.5) {\bf $\Omega_{i}^*$};
    \end{tikzpicture}
    \captionof{figure}{\label{fig:oversamplingDomain}
      Computational domain.}
    \vspace*{1.5em}
  \end{minipage}
  \hspace{0.05\linewidth}
  \begin{minipage}{0.55\linewidth}
    \centering
    \hspace{0.5mm}
    \begin{tikzpicture}[scale=0.9,baseline=(current bounding box.center)]
      \def\channelWidth{0.04}
      \fill[viridisTeal] (-1,-1) rectangle (2,2);
      \foreach \channelCenter in {-2/3,-1/3, 1/3,2/3, 4/3,5/3} {
        \filldraw[fill=viridisYellow,draw=viridisYellow]
        (-1, \channelCenter-\channelWidth) rectangle (2, \channelCenter+\channelWidth);
      }
      \draw[viridisViolet, thick] (-1,-1) rectangle (2,2);
    \end{tikzpicture}
    \hspace{1mm}
    \begin{tikzpicture}[scale=0.9,baseline=(current bounding box.center)]
      \def\channelWidth{0.04}
      \fill[viridisTeal] (-1,-1) rectangle (2,2);
      \foreach \channelCenter in {-2/3,-1/3, 1/3,2/3, 4/3,5/3} {
        \filldraw[fill=viridisYellow,draw=viridisYellow]
        (-1, \channelCenter-\channelWidth) rectangle (2, \channelCenter+\channelWidth);
        \filldraw[fill=viridisYellow,draw=viridisYellow]
        (\channelCenter-\channelWidth, -1) rectangle (\channelCenter+\channelWidth, 2);
      }
      \draw[viridisViolet, thick] (-1,-1) rectangle (2,2);
    \end{tikzpicture}
    \hspace{2mm}
    \begin{tikzpicture}[baseline=(current bounding box.center)]
      \begin{pgfplotslegend}[
        legend entries={$D_{ii}=10^2$, $D_{ii} = 1$},
        legend style={at={(0,0)}, anchor=center}
        ]
        \addlegendimage{fill=viridisYellow, area legend}
        \addlegendimage{fill=viridisTeal, area legend}
      \end{pgfplotslegend}
    \end{tikzpicture}
    \captionof{figure}{\label{fig:highConductivityChannels}
      Diagonal entries of the diffusion coefficient $D_{ii}$ exhibiting high-conductivity channels.}
  \end{minipage}
\end{figure}

In the case of parallel channels (\cref{fig:classicSolutionParallelChannels}) we first of all note the exponential decrease of
the approximation error with increasing size of the computed
local basis (\cref{fig:errorParallelChannels}). Furthermore, there is a distinctive increase
in approximation quality at $N=8$. We suspect that at this point the most energetic modes
related to the two channels reaching the interior have all been included in
the range approximation leaving only less energetic modes entering through low diffusivity regions.

\def\scalingSolutionPlots{0.32}

\begin{figure}[h!]
  \centering
  \begin{subfigure}{\scalingSolutionPlots\linewidth}
    \centering
    \includegraphics[width=0.9\linewidth]{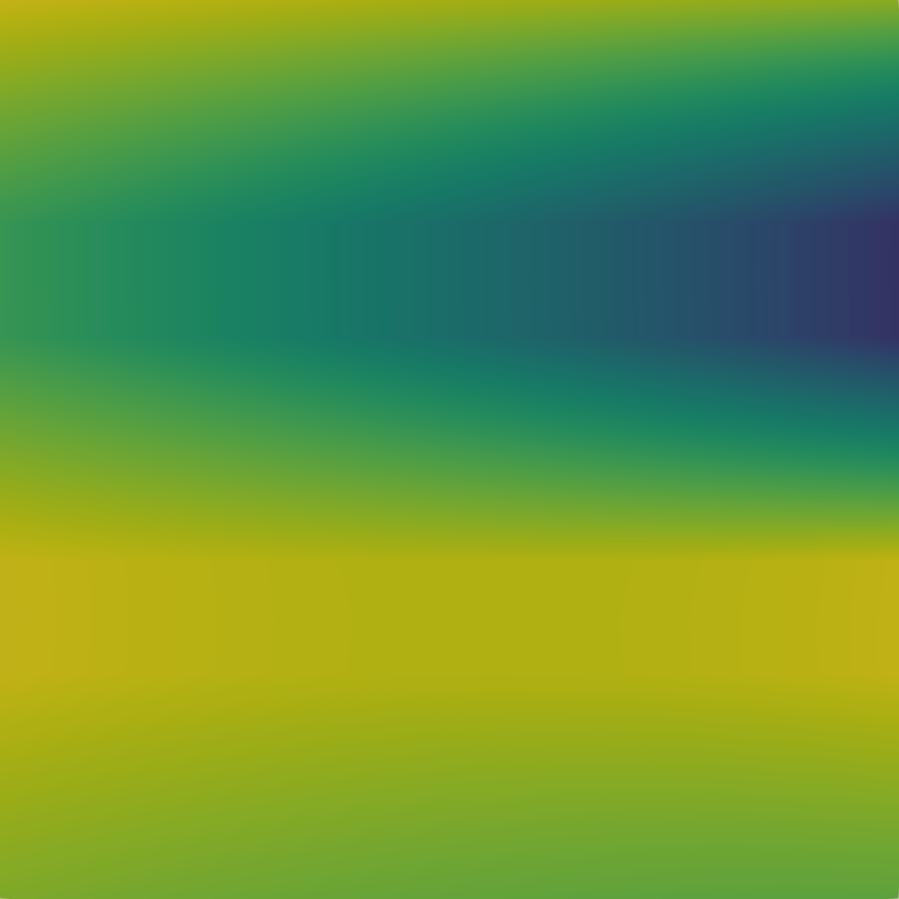}
    \caption{Restricted solution on $\Omega$}
  \end{subfigure}
  \begin{subfigure}{0.1\linewidth}
    \begin{tikzpicture}
      \pgfplotscolorbardrawstandalone[
       colormap/viridis,
       point meta min = -0.3,
       point meta max = 0.3,
       colorbar style = {height=\scalingSolutionPlots*0.85/0.1*\linewidth, width=0.25\linewidth}
      ]
    \end{tikzpicture}
    \vspace*{1.8em}
  \end{subfigure}
  \begin{subfigure}{\scalingSolutionPlots\linewidth}
    \centering
    \includegraphics[width=0.9\linewidth]{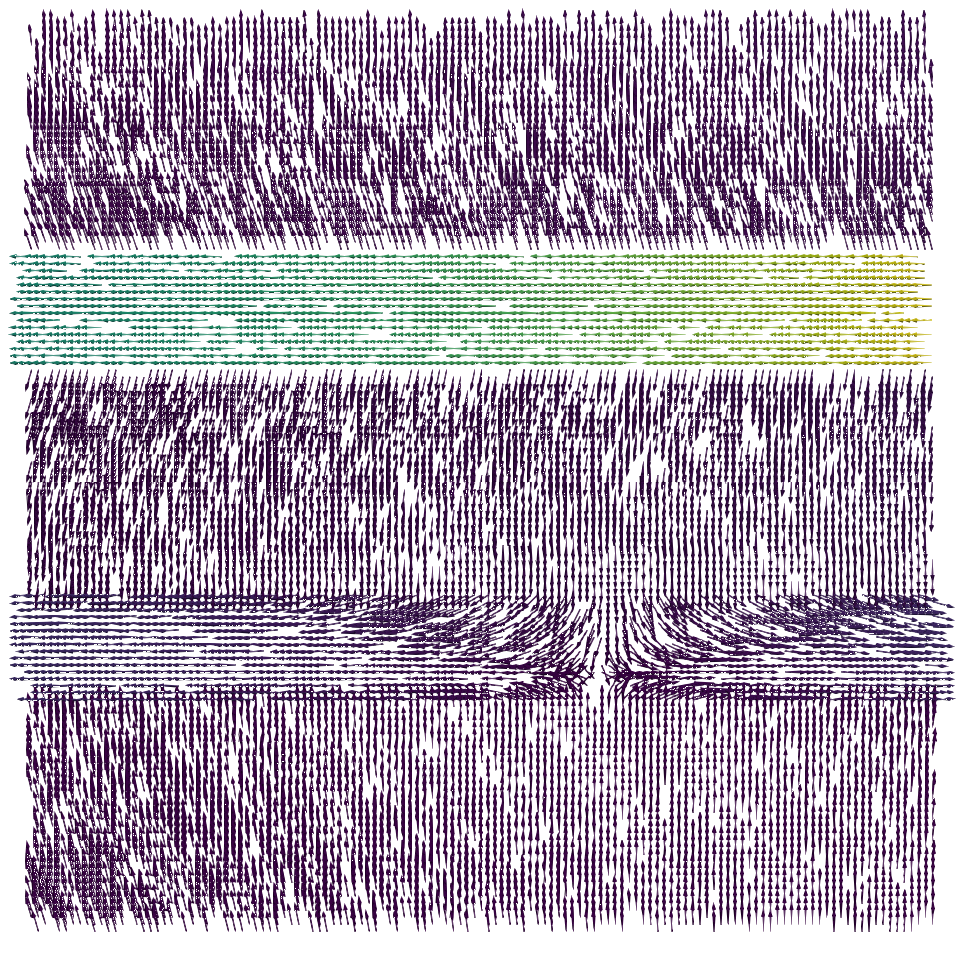}
    \caption{Restricted flux on $\Omega$}
  \end{subfigure}
  \begin{subfigure}{0.1\linewidth}
    \begin{tikzpicture}
      \pgfplotscolorbardrawstandalone[
       colormap/viridis,
       point meta min = 0.0,
       point meta max = 750.0,
       colorbar style = {height=\scalingSolutionPlots*0.9/0.1*\linewidth, width=0.25\linewidth}
      ]
    \end{tikzpicture}
    \vspace*{1.8em}
  \end{subfigure}
  \caption{\label{fig:classicSolutionParallelChannels}
    Exemplary solution to the local problem with channels in $x$-direction.}
\end{figure}

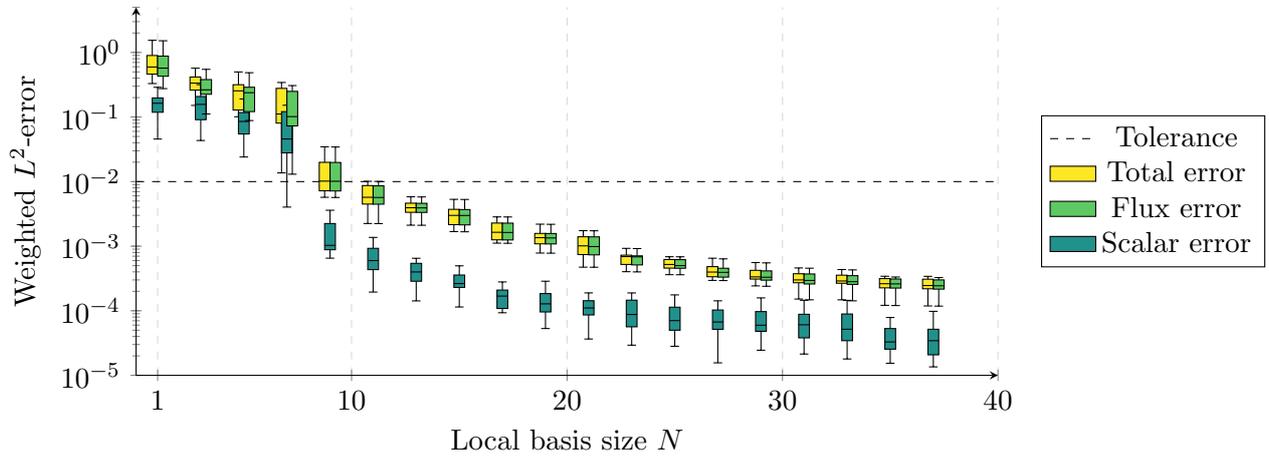
\begin{figure}[h!]
  \centering
  \def\nplots{3}
  \begin{tikzpicture}
    \begin{semilogyaxis}[
      boxplot/draw direction = y,
      boxplot/box extend = 2/(\nplots+1),
      width=0.7\linewidth,
      height=0.35\linewidth,
      xmajorgrids=true,
      grid style={gray!30, dashed},
      axis x line = bottom,
      axis y line = left,
      xmin=0, xmax=40, ymin=1e-5, ymax=5,
      xlabel= Local basis size $N$,
      xtick = {1,10,20,...,100},
      ylabel= {Weighted $L^2$-error},
      legend style={at={(1.05,0.5)},anchor=west},
      ]
      \addplot[mark=none, black, dashed, samples=2, domain=0:100] {1e-2};
      \addlegendentry{Tolerance}
      \foreach \n in {1,3,...,38} {
        \addplot[
        color=black,fill=PlotColorFull,
        boxplot, /pgfplots/boxplot/hide outliers,
        boxplot/draw position = \n + (1 - 0.5*(\nplots+1))/(\nplots+1),
        forget plot,
        ]
        table[y={error_dim_\n}, fill,col sep=comma]
        {data/range_evaluation_n_50_ntest_40_neval_20_classicParallelChannelProblem_weightedL2_contrast2.csv};
        \addplot[
        color=black,fill=PlotColorScalar,
        boxplot, /pgfplots/boxplot/hide outliers,
        boxplot/draw position = \n + (2 - 0.5*(\nplots+1))/(\nplots+1),
        forget plot,
        ]
        table[y={scalar_error_dim_\n}, fill,col sep=comma]
        {data/range_evaluation_n_50_ntest_40_neval_20_classicParallelChannelProblem_weightedL2_contrast2.csv};
        \addplot[
        color=black,fill=PlotColorFlux,
        boxplot, /pgfplots/boxplot/hide outliers,
        boxplot/draw position = \n + (3 - 0.5*(\nplots+1))/(\nplots+1),
        forget plot,
        ]
        table[y={flux_error_dim_\n}, fill,col sep=comma]
        {data/range_evaluation_n_50_ntest_40_neval_20_classicParallelChannelProblem_weightedL2_contrast2.csv};
      }
      \addlegendimage{area legend, fill=PlotColorFull}
      \addlegendentry{Total error}
      \addlegendimage{area legend, fill=PlotColorFlux}
      \addlegendentry{Flux error}
      \addlegendimage{area legend, fill=PlotColorScalar}
      \addlegendentry{Scalar error}
    \end{semilogyaxis}
  \end{tikzpicture}
  \caption{\label{fig:errorParallelChannels}
    Error in the range approximation for non intersecting parallel high-conductivity channels.}
\end{figure}

The second test case adds additional channels in the vertical direction creating a
lattice-like structure. Therefore, the different information entering from the boundary via the
channels interact which leads to a more homogeneous solution in the interior.
This also shows in a faster increasing approximation quality of the spaces produced
by \cref{alg:randomizedRangefinder} (\cref{fig:errorLatticeProblem}).

\begin{figure}[h!]
  \centering
  \begin{subfigure}{\scalingSolutionPlots\linewidth}
    \centering
    \includegraphics[width=0.9\linewidth]{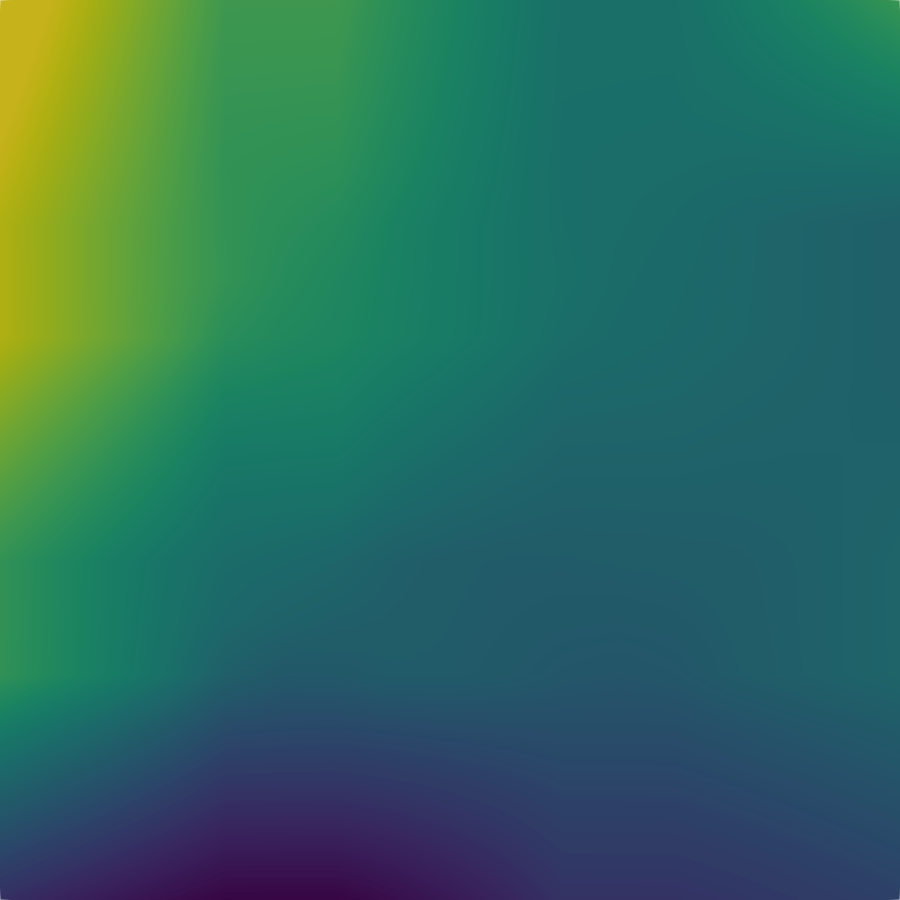}
    \caption{Restricted solution on $\Omega$}
  \end{subfigure}
  \begin{subfigure}{0.2\linewidth}
    \begin{tikzpicture}
      \pgfplotscolorbardrawstandalone[
       colormap/viridis,
       point meta min = 0,
       point meta max = 0.15,
       colorbar style = {height=\scalingSolutionPlots*0.42/0.1*\linewidth, width=0.15\linewidth}
      ]
    \end{tikzpicture}
    \vspace*{2.3em}
  \end{subfigure}
  \hfill
  \begin{subfigure}{\scalingSolutionPlots\linewidth}
    \centering
    \includegraphics[width=0.9\linewidth]{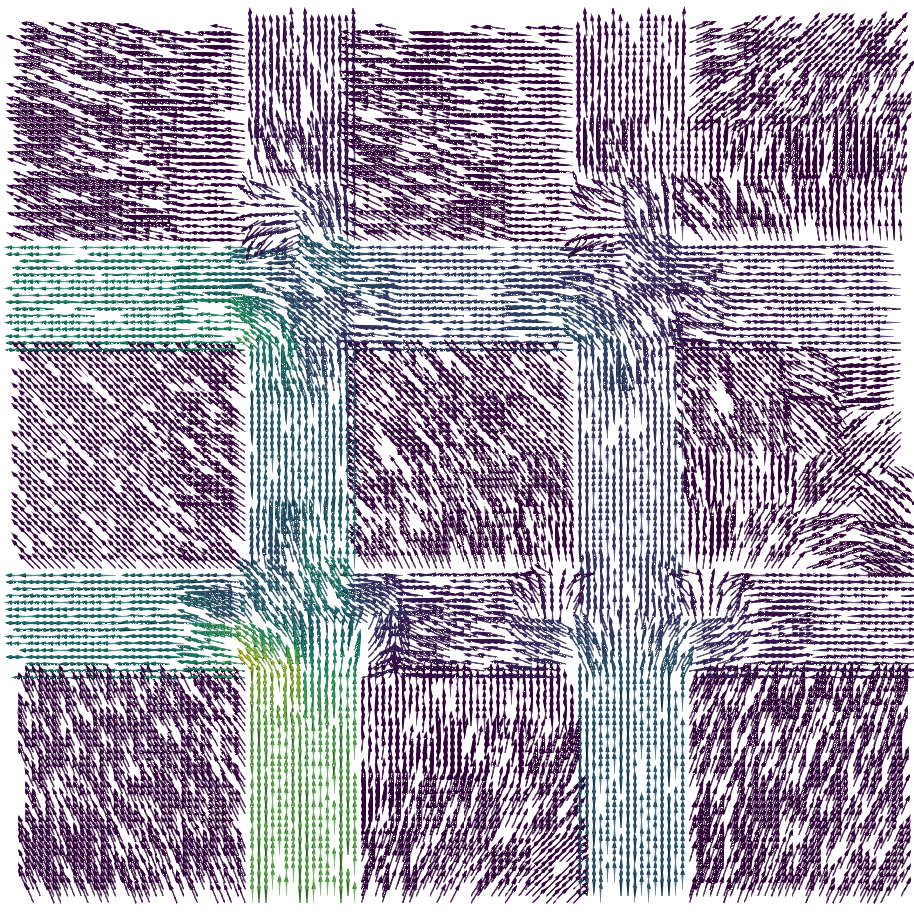}
    \caption{Restricted flux on $\Omega$}
  \end{subfigure}
  \begin{subfigure}{0.1\linewidth}
    \begin{tikzpicture}
      \pgfplotscolorbardrawstandalone[
       colormap/viridis,
       point meta min = 0.0,
       point meta max = 500.0,
       colorbar style = {height=\scalingSolutionPlots*0.9/0.1*\linewidth, width=0.3\linewidth}
      ]
    \end{tikzpicture}
    \vspace*{1.8em}
  \end{subfigure}
  \caption{\label{fig:classicSolutionLatticeChannels}
    Exemplary solution to the local problem with channels in $x$- and $y$-direction.}
\end{figure}

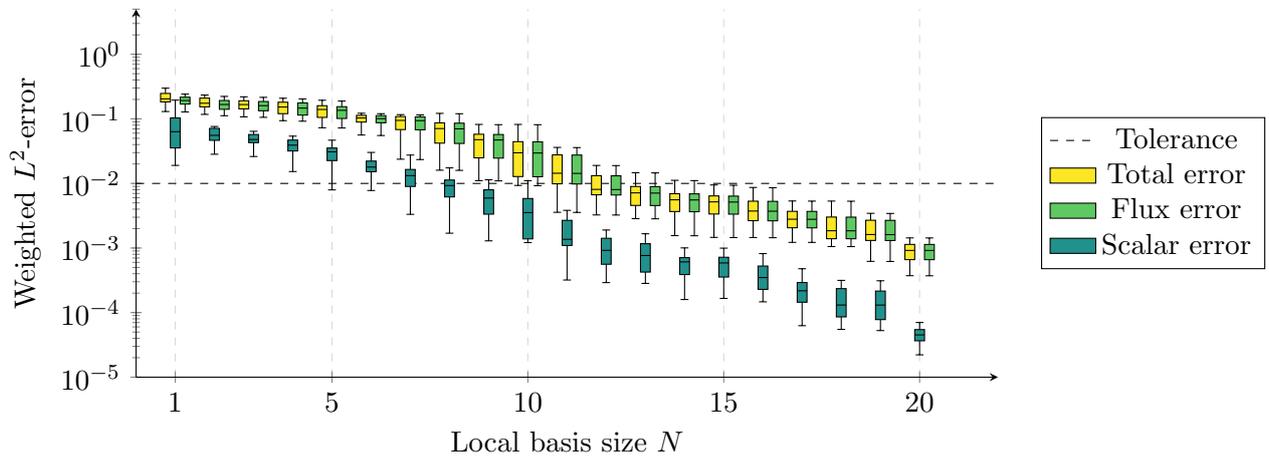
\begin{figure}[h!]
  \centering
  \def\nplots{3}
  \begin{tikzpicture}
    \begin{semilogyaxis}[
      boxplot/draw direction = y,
      boxplot/box extend = 1/(\nplots+1),
      width=0.7\linewidth,
      height=0.35\linewidth,
      xmajorgrids=true,
      grid style={gray!30, dashed},
      axis x line = bottom,
      axis y line = left,
      xmin=0, xmax=22, ymin=1e-5, ymax=5,
      xlabel= Local basis size $N$,
      xtick = {1,5,10,...,100},
      ylabel= {Weighted $L^2$-error},
      legend style={at={(1.05,0.5)},anchor=west},
      ]
      \addplot[mark=none, black, dashed, samples=2, domain=0:100] {1e-2};
      \addlegendentry{Tolerance}
      \foreach \n in {1,...,20} {
        \addplot[
          color=black,fill=PlotColorFull,
          boxplot, /pgfplots/boxplot/hide outliers,
          boxplot/draw position = \n + (1 - 0.5*(\nplots+1))/(\nplots+1),
          forget plot,
        ]
        table[y={error_dim_\n}, fill,col sep=comma]
        {data/range_evaluation_n_50_ntest_40_neval_20_classicLatticeChannelProblem_weightedL2.csv};
        \addplot[
          color=black,fill=PlotColorScalar,
          boxplot, /pgfplots/boxplot/hide outliers,
          boxplot/draw position = \n + (2 - 0.5*(\nplots+1))/(\nplots+1),
          forget plot,
        ]
        table[y={scalar_error_dim_\n}, fill,col sep=comma]
        {data/range_evaluation_n_50_ntest_40_neval_20_classicLatticeChannelProblem_weightedL2.csv};
        \addplot[
          color=black,fill=PlotColorFlux,
          boxplot, /pgfplots/boxplot/hide outliers,
          boxplot/draw position = \n + (3 - 0.5*(\nplots+1))/(\nplots+1),
          forget plot,
        ]
        table[y={flux_error_dim_\n}, fill,col sep=comma]
        {data/range_evaluation_n_50_ntest_40_neval_20_classicLatticeChannelProblem_weightedL2.csv};
      }
     \addlegendimage{area legend, fill=PlotColorFull}
      \addlegendentry{Total error}
      \addlegendimage{area legend, fill=PlotColorFlux}
      \addlegendentry{Flux error}
      \addlegendimage{area legend, fill=PlotColorScalar}
      \addlegendentry{Scalar error}
    \end{semilogyaxis}
  \end{tikzpicture}
  \caption{\label{fig:errorLatticeProblem}
    Problem involving intersecting lattice-like channels in $x$- and $y$-direction}
\end{figure}

\section{Conclusion}
In this contribution we evaluated the applicability of localized training methods to PDE-operators
of Friedrichs' type. We showed that Caccioppoli-type estimates hold and proved
compactness of the transfer operator provided that the operator exhibits sufficient
smoothing properties. The method was then applied to a convection-diffusion-reaction problem
where we verified the smoothing properties of the associated first-order Friedrichs' operator
in the weak formulation.
Employing an algorithm based on randomized numerical linear algebra then produces local spaces
that closely approximate the optimal spaces spanned by the left singular vectors
of the transfer operator. In a challenging test case featuring multiple high-conductivity channels,
we demonstrated the numerical viability of this strategy.
\par
Further research will integrate the generated local spaces into composite methods to solve
the globally coupled problem. In order to tackle parametric problems where computing
local approximation spaces for many parameters would be required, adaptive approaches
using online enrichment strategies provide a promising approach.
Notably, recent work proposed using a localized, residual-based error estimator to drive
an adaptive algorithm which was then successfully applied to
a scalar elliptic problem~\cite{keil2024local}.
Future work will aim at combining this approach with our work.

\bibliographystyle{abbrv}
\bibliography{ms}

\begin{thebibliography}{10}

\bibitem{abdulle2012HMM}
A.~Abdulle, E.~Weinan, B.~Engquist, and E.~Vanden-Eijnden.
\newblock The heterogeneous multiscale method.
\newblock {\em Acta Numerica}, 21:1--87, 2012.

\bibitem{babuskaLipton2011}
I.~Babuska and R.~Lipton.
\newblock Optimal local approximation spaces for generalized finite element
  methods with application to multiscale problems.
\newblock {\em Multiscale Modeling \& Simulation}, 9(1):373--406, 2011.

\bibitem{BennerOhlbergerCohenWillcox}
P.~Benner, M.~Ohlberger, A.~Cohen, and K.~Willcox.
\newblock {\em Model Reduction and Approximation}.
\newblock Society for Industrial and Applied Mathematics, Philadelphia, PA,
  2017.

\bibitem{broersen2015petrov}
D.~Broersen and R.~P. Stevenson.
\newblock A {P}etrov--{G}alerkin discretization with optimal test space of a
  mild-weak formulation of convection--diffusion equations in mixed form.
\newblock {\em IMA Journal of Numerical Analysis}, 35(1):39--73, 2015.

\bibitem{BEOR:17}
A.~Buhr, C.~Engwer, M.~Ohlberger, and S.~Rave.
\newblock Arbi{L}o{M}od, a simulation technique designed for arbitrary local
  modifications.
\newblock {\em SIAM J. Sci. Comput.}, 39(4):A1435--A1465, 2017.

\bibitem{buhr2020localizedMOR}
A.~Buhr, L.~Iapichino, M.~Ohlberger, S.~Rave, F.~Schindler, and K.~Smetana.
\newblock Localized model reduction for parameterized problems.
\newblock In {\em Handbook on Model Order Reduction}. Walter De Gruyter, 2020.

\bibitem{buhr2018randomized}
A.~Buhr and K.~Smetana.
\newblock Randomized local model order reduction.
\newblock {\em SIAM journal on scientific computing}, 40(4):A2120--A2151, 2018.

\bibitem{cai1994FOSLS}
Z.~Cai, R.~Lazarov, T.~A. Manteuffel, and S.~F. McCormick.
\newblock First-order system least squares for second-order partial
  differential equations: Part {I}.
\newblock {\em SIAM Journal on Numerical Analysis}, 31(6):1785--1799, 1994.

\bibitem{efendiev2013GMsFEM}
Y.~Efendiev, J.~Galvis, and T.~Y. Hou.
\newblock Generalized multiscale finite element methods ({GMsFEM}).
\newblock {\em Journal of computational physics}, 251:116--135, 2013.

\bibitem{ernGuermond2006Friedrichs1}
A.~Ern and J.-L. Guermond.
\newblock Discontinuous {G}alerkin methods for {F}riedrichs' systems. {I}.
  {G}eneral theory.
\newblock {\em SIAM journal on numerical analysis}, 44(2):753--778, 2006.

\bibitem{ernGuermond2007}
A.~Ern, J.-L. Guermond, and G.~Caplain.
\newblock An {I}ntrinsic {C}riterion for the {B}ijectivity of {H}ilbert
  {O}perators {R}elated to {F}riedrich'{S}ystems.
\newblock {\em Communications in partial differential equations},
  32(2):317--341, 2007.

\bibitem{friedrichs1958}
K.~O. Friedrichs.
\newblock Symmetric positive linear differential equations.
\newblock {\em Communications on Pure and Applied Mathematics}, 11(3):333--418,
  1958.

\bibitem{GL2017}
M.~J. Gander and A.~Loneland.
\newblock S{HEM}: an optimal coarse space for {RAS} and its multiscale
  approximation.
\newblock In {\em Domain decomposition methods in science and engineering
  {XXIII}}, volume 116 of {\em Lect. Notes Comput. Sci. Eng.}, pages 313--321.
  Springer, Cham, 2017.

\bibitem{HKKR18}
A.~Heinlein, A.~Klawonn, J.~Knepper, and O.~Rheinbach.
\newblock Multiscale coarse spaces for overlapping {S}chwarz methods based on
  the {ACMS} space in 2{D}.
\newblock {\em Electron. Trans. Numer. Anal.}, 48:156--182, 2018.

\bibitem{hou1997MsFEM}
T.~Y. Hou and X.-H. Wu.
\newblock A multiscale finite element method for elliptic problems in composite
  materials and porous media.
\newblock {\em Journal of computational physics}, 134(1):169--189, 1997.

\bibitem{keil2024local}
T.~Keil, M.~Ohlberger, F.~Schindler, and J.~Schleuß.
\newblock Local training and enrichment based on a residual localization
  strategy.
\newblock {\em arXiv preprint arXiv:2404.16537}, 2024.

\bibitem{malqvist2014LOD}
A.~M{\aa}lqvist and D.~Peterseim.
\newblock Localization of elliptic multiscale problems.
\newblock {\em Mathematics of Computation}, 83(290):2583--2603, 2014.

\bibitem{OS:15}
M.~Ohlberger and F.~Schindler.
\newblock Error control for the localized reduced basis multiscale method with
  adaptive on-line enrichment.
\newblock {\em SIAM J. Sci. Comput.}, 37(6):A2865--A2895, 2015.

\bibitem{picard1984maxwell}
R.~Picard.
\newblock An elementary proof for a compact imbedding result in generalized
  electromagnetic theory.
\newblock {\em Mathematische Zeitschrift}, 187(2):151--164, 1984.

\bibitem{rellich1930}
F.~Rellich.
\newblock Ein satz {\"u}ber mittlere konvergenz.
\newblock {\em Nachrichten von der Gesellschaft der Wissenschaften zu
  G{\"o}ttingen, Mathematisch-Physikalische Klasse}, 1930:30--35, 1930.

\bibitem{romor2023friedrichs}
F.~Romor, D.~Torlo, and G.~Rozza.
\newblock Friedrichs' systems discretized with the {D}iscontinuous {G}alerkin
  method: domain decomposable model order reduction and {G}raph {N}eural
  {N}etworks approximating vanishing viscosity solutions.
\newblock {\em arXiv preprint arXiv:2308.03378}, 2023.

\bibitem{schleussSmetana}
J.~Schleu{\ss} and K.~Smetana.
\newblock Optimal local approximation spaces for parabolic problems.
\newblock {\em Multiscale Modeling \& Simulation}, 20(1):551--582, 2022.

\bibitem{SmePat16}
K.~Smetana and A.~T. Patera.
\newblock Optimal local approximation spaces for component-based static
  condensation procedures.
\newblock {\em SIAM J. Sci. Comput.}, 38(5):A3318--A3356, 2016.

\bibitem{weber1980maxwell}
C.~Weber and P.~Werner.
\newblock A local compactness theorem for {M}axwell's equations.
\newblock {\em Mathematical Methods in the Applied Sciences}, 2(1):12--25,
  1980.

\bibitem{weck1974maxwell}
N.~Weck.
\newblock Maxwell's boundary value problem on {R}iemannian manifolds with
  nonsmooth boundaries.
\newblock {\em Journal of Mathematical Analysis and Applications},
  46(2):410--437, 1974.

\end{thebibliography}

\end{document}